\newif\ifpageone
\newcommand{\bs}[1]{\boldsymbol{#1}}
\def \d {\mathrm{d}}
\def \A {\mathcal{A}}
\def \B {\mathcal{B}}
\def \L {\mathcal{L}}
\def \E {\mathbb{E}}
\def \F {\mathcal{F}}
\def \U {\mathcal{U}}
\def \S {\mathcal{S}}
\def \Y {\mathcal{Y}}
\def \R {\mathbb{R}}
\def\Z{\mathbb{Z}}
\def \aa {\bs\alpha}
\def \Ga {\mathbf{Ga}}
\def \oo {\mathbf{0}}
\def \ee {\mathbf{e}}
\def \mm {\mathbf{m}}
\def \NN {\mathbf{N}}
\def \MM {\mathbf{M}}
\def \nn {\mathbf{n}}
\def \hh {\mathbf{h}}
\def \kk {\mathbf{k}}
\def \ii {\mathbf{i}}
\def \jj {\mathbf{j}}
\def \ll {\mathbf{l}}
\def \xx {\mathbf{x}}
\def \zz {\mathbf{z}}
\def \XX {\mathbf{X}}
\def \ZZ {\mathbf{Z}}
\def \yy {\mathbf{y}}
\def \YY {\mathbf{Y}}
\def \iid {\overset{\text{iid}}{\sim}}	
\def \nminus {\stackrel{\leftarrow}{\textbf{n}}}
\def \nplus {\stackrel{\rightarrow}{\textbf{n}}}
\newtheorem{theorem}{Theorem}[section]
\newtheorem{proposition}[theorem]{Proposition} 
\newtheorem{corollary}[theorem]{Corollary}
\newtheorem{lemma}[theorem]{Lemma} 
\newtheorem{definition1}[theorem]{Definition} 
\newtheorem{remark1}[theorem]{Remark} \newenvironment{remark}{\begin{remark1}\rm}{\hfill$\square$\end{remark1}}
\long\def\symbolfootnote[#1]#2{\begingroup\def\thefootnote{\hspace*{-1mm}\fnsymbol{footnote}}\footnote[#1]{#2}\endgroup}
\title{\bf 
Smoothing distributions for conditional Fleming--Viot and Dawson--Watanabe diffusions
}
\author{\normalsize
\textsc{Filippo Ascolani}\\
\emph{\normalsize Department of Decision Sciences and BIDSA, Bocconi University}\\[2mm]
\textsc{\normalsize Antonio Lijoi}\\
\emph{\normalsize Department of Decision Sciences and BIDSA, Bocconi University}\\[2mm]
\textsc{\normalsize Matteo Ruggiero\footnote{matteo.ruggiero@unito.it}}\\
\emph{\normalsize University of Torino and Collegio Carlo Alberto}}
\date{\today}
\begin{document}
\maketitle
\thispagestyle{empty}



\begin{abstract}
We study the distribution of the unobserved states of two measure-valued diffusions of Fleming--Viot and Dawson--Watanabe type, conditional on observations from the underlying populations collected at past, present and future times. If seen as nonparametric hidden Markov models, this amounts to finding the smoothing distributions of these processes, which we show can be explicitly described in recursive form as finite mixtures of laws of Dirichlet and gamma random measures respectively. We characterize the time-dependent weights of these mixtures, accounting for potentially different time intervals between data collection times, and fully describe the implications of assuming a discrete or a nonatomic distribution for the underlying process that drives mutations. In particular, we show that with a nonatomic mutation offspring distribution, the inference automatically upweights mixture components that carry, as atoms, observed types shared at different collection times. The predictive distributions for further samples from the population conditional on the data are also identified and shown to be mixtures of generalized P\'olya urns, conditionally on a latent variable in the Dawson--Watanabe case. 
\end{abstract}

\section{Introduction and summary of results}


\subsection{Problem statement}

We consider the problem of evaluating the unknown state of a Fleming--Viot (FV) or a Dawson--Watanabe (DW) diffusion \citep{W68,D75,FV79}, two of the most popular measure-valued models for stochastic population dynamics, conditional on partial information on the underlying population collected at past, present and future times. See \cite{EK86,EK93,D10,F10} for reviews on these processes.
This task relates to the literature on hidden Markov models \citep{CMR05} 
 and consists in determining the so-called \emph{smoothing distributions} of the marginal states $X_{t_{i}}$ of an unobserved Markov process, often called the hidden or latent \emph{signal}, evaluated at time $t_{i}$ given samples $Y_{t_{0}},\ldots,Y_{t_{N}}$ from the observation model, which is parameterised by the signal state, collected before and after $t_{i}$, where $0=t_{0}<\cdots<t_{N}$ and $0<i<N$. These conditional distributions are typically used to improve previous estimates obtained at a certain time once additional observations become available at later times, often resulting in a smoother estimated trajectory for the unobserved signal, and they also constitute the starting point for performing Bayesian inference on the model parameters (see, e.g., \citealp{KKK20}).
When all data are collected at the same time $t_{i}$, the problem reduces to a simple Bayesian update of the law of $X_{t_{i}}$ given the observations, with the unconditional law of $X_{t_{i}}$ acting as the prior on the signal actual location. In the case of the FV and DW diffusions we consider here, taken at stationarity, this simplifies to updating the law of a Dirichlet process and a gamma random measure, results which date back to the seminal works of \cite{F73,A74,L82,LW89}.
When only past and present data are considered, the smoothing  problem reduces to a \emph{filtering} problem, which evaluates the law of $X_{t_{i}}$ given $Y_{t_{0}},\ldots,Y_{t_{i}}$.

Closed form expressions for filtering and smoothing distributions can typically only be obtained for a handful of simple models even when the signal is finite-dimensional, e.g., by assuming linear Gaussian systems for both signal and observations or by taking a finite state space for the signal. In these cases the associated computations have complexity that grows linearly with the number of observations. Other modelling formulations, instead, typically require some form of approximation (see, e.g., \cite{DM98,Detal11}). Intermediate results can sometimes be obtained and entail computations whose complexity grows polynomially with the number of observations, yielding so-called \emph{computable filters} \cite{CG06}.
Along these lines, \cite{CG09} obtained a computable filter  for a two-type Wright--Fisher (WF) diffusion and \cite{PR14} for a $K$-type version, while \cite{PRS16} extended the latter to measure-valued signals driven by a FV or a DW diffusion, whereby the associated hidden Markov model can be regarded as nonparametric. Recently, \cite{KKK20} solved the smoothing problem for a $K$-type WF diffusion.

In this paper we characterize the laws of the marginal states of FV and DW diffusions, given samples from the respective underlying populations collected before and after the state temporal index, thus solving the smoothing problem for nonparametric hidden Markov models with these measure-valued unobserved signals. 
We show that these distributions can be written as finite mixtures of laws of Dirichlet and gamma random measures respectively, whose time-dependent mixture weights are fully described and can account for different time intervals between data collection times. As a byproduct of the above results, we describe the predictive distribution for further samples from the population given the entire dataset, which are shown to be mixtures of generalized P\'olya urns. Our results prove that computable smoothing and conditional sampling from the population are feasible with signals given by these FV and DW processes, bringing forward these models as possible canonical choices in a nonparametric framework for hidden Markov models.

To introduce our setting in more detail, let $\Y$ be a Polish space and $\alpha$ a finite and non-null Borel measure on $\Y$. The class of FV processes we consider here is the so-called infinitely-many-neutral alleles model \citep{EK86} with parent-independent mutation, whose transition function can be written \citep{EG93,Wea07}
\begin{equation}\label{FV transition}
P^{\text{FV}}_t(x,\d x') = \sum_{m = 0}^{\infty} d_m(t) \int_{\Y^m}\Pi_{\alpha + \sum_{i = 1}^m\delta_{y_i}}(\d x')x^{m}(\d y_1,\ldots,\d y_m).
\end{equation}
Here $x^{m}$ denotes an $m$-fold product measure, $d_m(t)$ is the probability that a $\mathbb{Z}_{+}$-valued death process starting at infinity with infinitesimal rates $n(\theta+n-1)/2$ from $n$ to $n-1$ is in state $m$ at time $t$ (see \citealp{G80,T84}), and 
 $\Pi_{\alpha}$ is the law of a Dirichlet random measure \citep{F73}, i.e., such that the projection over any Borel partition $A_{1},\ldots,A_{K}$ of $\Y$ has the Dirichlet distribution with parameters $\alpha(A_{1}),\ldots,\alpha(A_{K})$. Alternatively,
 \begin{equation}\label{DP law}
\Pi_{\alpha}(\cdot)=\mathbb{P}\bigg\{\sum_{i\ge1}p_{i}\delta_{V_{i}}\in \cdot\bigg\},\quad \quad 
(p_{1},p_{2},\ldots)\sim \textsc{PD}(\theta), \quad \quad 
V_{i}\iid P_{0},
\end{equation} 
where $\textsc{PD}(\theta)$ denotes a Poisson--Dirichlet distribution \citep{K75}, {with parameter $\theta:=\alpha(\Y)$, 
	and $P_{0}:=\alpha/\theta$}. Note that $\Pi_{\alpha}$ is the reversible measure of \eqref{FV transition}, while $P_{0}$, often called the mutant \emph{offspring distribution}, is the stationary distribution of the $\Y$-valued autonomous Markov process that drives mutations in the population (cf.~\citealp{EK93}). In this paper we will consider both cases of $P_{0}$ being discrete and nonatomic, the former reducing the above formulation to the so-called \emph{unlabeled} model \citep{EK81}, and we will show the different statistical implications that this choice determines in terms of smoothing.

Next we will consider the class of DW processes given by the branching measure-valued diffusions studied in \cite{EG93b}, which take values in the space of finite positive measures and has transition function
\begin{equation}\label{DW transition}
P^{\text{DW}}_t(z,\d z') = \sum_{m = 0}^{\infty} d_m^{|z|, \beta}(t) \int_{\Y^m}\Gamma^{\beta+S_t}_{\alpha + \sum_{i = 1}^m\delta_{y_i}}(\d z')(z/|z|)^m(\d y_1, \dots, \d y_m),
\end{equation}
where $|z|:=z(\Y)$ denotes, here and throughout, the total mass of $z$, 
\[
d_m^{|z|, \beta}(t) = \mbox{Po}(m\,|\, S_t|z|), \quad S_t = \beta/(e^{\beta t/2}-1),
\]
with $\text{Po}(\cdot|\lambda)$ a Poisson pmf with mean $\lambda$, and $\Gamma^{\beta}_{\alpha}$ is the law of a gamma random measure with shape $\alpha$ and rate $\beta>0$, i.e., a positive atomic measure $z$ such that for any finite collection of pairwise disjoint Borel subsets $A_{1},\ldots,A_K$ of $\Y$, the random variables $z(A_1),\ldots,z(A_K)$ are independent and $z(A_i)\sim \text{Gamma}(\alpha(A_i),\beta)$. 

FV processes have well-known countable representations in terms of limits of particle systems when the population size goes to infinity \citep{EK93,DK96}, and it is therefore a well established interpretation that they describe the evolving frequencies of alleles observed at a single locus in an ideally infinite population. On the other hand, DW processes arise as scaling limits of branching particle systems \citep{D93} and have a close relationship with FV processes, as, roughly speaking, the latter can be constructed as a normalised, unit-mass version of the former \citep{EM91, P91}.
It is therefore natural to see these models, and the associated sampling from the underlying particle representations, as dynamic extensions of classical statistical settings that are widely used in Bayesian nonparametric statistics. More precisely, denoting by $Y_{t}^{i}\in \Y$ the $i$-th observation collected at time $t$, for the first model we let 
\begin{equation}\label{FV HMM}
Y_{t}^i  |  X_{t} = x \overset{\text{iid}}{\sim} x, \quad X\sim \text{FV}_{\alpha}.
\end{equation}
Here $X\sim \text{FV}_{\alpha}$ denotes that $X$ is a FV process with transition \eqref{FV transition}, and given $X_{t}=x$, the observations are conditionally \emph{iid} random samples from the current underlying population summarised by the atomic probability measure $x$ (recall that \eqref{DP law} is supported by discrete probability measures). For the second model, we let
\begin{equation}\label{DW HMM}
\YY^{i}_{t}  |  Z_{t} = z \overset{\text{iid}}{\sim} \text{PP}(z), \quad Z \sim \text{DW}_{\alpha,\beta}.
\end{equation}
Here $Z \sim \text{DW}_{\alpha,\beta}$ denotes that $Z$ is a DW process with transition \eqref{DW transition}, and given $Z_{t}=z$, the $i$-th draw consists in a vector of observations from a Poisson point process with intensity $z$. That is, a number $m \sim \text{Po}(|z|)$ of random samples are collected from the normalised measure $\bar z:=z/|z|$, i.e.
\begin{equation}\label{PP1}
\begin{aligned}
&Y_{t,j}^i  |  Z = z, m \overset{\text{iid}}{\sim} \bar z, \quad j = 1, \dots, m, \quad \quad m  |  Z = z \sim \text{Po}(|z|).
\end{aligned}
\end{equation}
Both settings can be seen as nonparametric hidden Markov models, since the signals are measure-valued and the observations are conditionally independent given the signal state. 
In this framework, we are interested in determining the distributions of $X_{t}$ and $Z_{t}$ given observations from the respective models collected at times before and after $t$.


\subsection{Summary of results}

We informally summarize our main results on the smoothing distributions in the following theorem, and then briefly discuss its interpretation and implications.  We will denote throughout by $\L(X_{t})$ and $\L(Z_{t})$ the laws of $X_{t}$ and $Z_{t}$, and assume observations are taken at discrete times $0=t_{0}<t_{1}<\cdots<t_{N}=T$ where equidistance between collection times is not assumed. We also adopt here the compact notation $\YY_{t_{0:N}}$ for the entire dataset, postponing for the later sections the specification of its precise structure. 

\begin{theorem}\label{summary theorem}
Assume model \eqref{FV HMM} with $X$ being a FV diffusion with transition function \eqref{FV transition}, and that the data $\YY_{t_{0:N}}$ feature $K$  distinct values  $y_{1},\ldots,y_{K}$. Then for every $0\le i\le N$ there exists a set $\MM\subset \Z_{+}^{K}$ of finite cardinality and weights $\{w_{\mm},\mm\in \MM\}$ summing up to one such that 
\begin{equation}\label{summary theorem FV}
\L(X_{t_{i}}|\YY_{t_{0:N}})=\sum_{\mm\in \MM}w_{\mm}\Pi_{\alpha + \sum_{j = 1}^{K}m_{j}\delta_{y_{j}}}.
\end{equation} 
Similarly, assume model \eqref{DW HMM} with $Z$ being a DW diffusion with transition function \eqref{DW transition}, and that the data $\YY'_{t_{0:N}}$ feature $K'$ distinct values $y'_{1},\ldots,y'_{K'}$. Then for every $0\le i\le N$ there exists a set $\MM'\subset \Z_{+}^{K'}$ of finite cardinality, a constant $b\in \R_{+}$ and weights $\{w_{\mm}^{b},\mm\in \MM'\}$ summing up to one, such that 
\begin{equation}\label{summary theorem DW}
\L(Z_{t_{i}}| \YY'_{t_{0:N}})=\sum_{\mm\in \MM'}w^{b}_{\mm}\Gamma^{\beta+b}_{\alpha + \sum_{j = 1}^{K'}m_{j}\delta_{y'_{j}}}.
\end{equation} 
\end{theorem}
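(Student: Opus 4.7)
The plan is to reduce the smoothing distribution to a combination of the known filtering distribution and a backward information factor, then exploit Dirichlet--multinomial (resp.\ gamma--Poisson) conjugacy to preserve the finite-mixture structure throughout. By the Markov property of the signal and the conditional independence of observations given the hidden state, one has $\L(X_{t_i}\mid \YY_{t_{0:N}}) \propto \L(X_{t_i}\mid \YY_{t_{0:i}})\cdot p(\YY_{t_{i+1:N}}\mid X_{t_i})$, where the first factor is the forward filter at $t_i$ and the second is the likelihood of future observations given the current signal value. By \cite{PRS16}, the forward filter is already a finite mixture of Dirichlet random measures of the asserted form, so the task reduces to analyzing the backward factor and showing that the product preserves the mixture structure.

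For the backward factor I would run the recursion $h_k(x)=\int p(\YY_{t_{k+1}}\mid x')\,h_{k+1}(x')\,P^{\text{FV}}_{t_{k+1}-t_k}(x,\d x')$ with terminal condition $h_N\equiv 1$, from $k=N-1$ down to $k=i$. Plugging \eqref{FV transition} into this recursion, each step splits into an $m$-sum weighted by the death-process masses $d_m(\cdot)$, in which the inner integral against $\Pi_{\alpha+\sum \delta_{y_j}}$ collapses to a Blackwell--MacQueen predictive probability, leaving a finite linear combination of predictive-type functionals of the form $x\mapsto \int_{\Y^{m}} x^{m}(\d y_1\cdots \d y_{m})$ evaluated at observed types. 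Since the $d_m(\cdot)$ factors truncate summations at the number of incoming lineages, induction gives $h_i(x)$ as a finite linear combination of such functionals indexed over a finite subset of $\Z_+^K$. Multiplying by the forward mixture of Dirichlets and invoking Dirichlet--multinomial conjugacy once more produces a finite mixture $\sum_{\mm} w_\mm \Pi_{\alpha+\sum_j m_j\delta_{y_j}}$, establishing \eqref{summary theorem FV}.

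The DW case follows the same forward-backward template with gamma--Poisson conjugacy replacing Dirichlet--multinomial. The extra scalar $b\in\R_+$ in \eqref{summary theorem DW} arises because the backward functionals now involve factors of the form $|z|^{n}e^{-c|z|}$ generated by Poisson sampling and forward propagation across multiple future collection times; these rescale the rate of the gamma mixture components, producing the $\beta+b$ offset after consolidation with the forward gamma mixture.

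The main obstacle will be the combinatorial bookkeeping. One must verify that at each backward step only finitely many multiplicity indices carry nonzero coefficients and derive explicit expressions for the weights that accommodate arbitrary time increments $t_{k+1}-t_k$. Finiteness is tractable because the FV/DW transitions can produce at most as many distinguishable lineages as they receive, so the effective index set is componentwise dominated by the total multiplicity vector of all observations. For the DW case one must additionally propagate the contribution to $b$ consistently through the recursion, since it accumulates via the Poisson likelihoods and the $d_m^{|z|,\beta}(t)$ weights, and verify that it merges cleanly into the gamma rate of the final mixture components.
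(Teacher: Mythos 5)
Your overall architecture (forward filter at $t_i$ times the backward likelihood $p(\YY_{t_{i+1:N}}\mid X_{t_i})$, then conjugate merging) is the same as the paper's, which also builds $\L(X_{t_i}\mid\YY_{t_{0:i-1}})$ and $\L(X_{t_i}\mid\YY_{t_{i+1:N}})$ separately and combines them. But the step on which everything hinges is justified by a mechanism that is simply false. You claim that in the backward recursion ``the $d_m(\cdot)$ factors truncate summations at the number of incoming lineages,'' so that each $h_k$ is a finite combination by induction. The coefficients $d_m(t)$ in \eqref{FV transition} are the marginal probabilities of a death process started \emph{at infinity}; they are strictly positive for every $m\ge 0$, so the expansion of $P^{\text{FV}}_t(x,\d x')$ is genuinely infinite and nothing in it is truncated by the data. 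Plugging \eqref{FV transition} into your recursion therefore produces, at each step, an infinite series over $m$ of polynomials in $(x(\{y_1\}),\dots,x(\{y_K\}))$ whose degrees grow with $m$, and the finiteness of $h_i$ is exactly the nontrivial fact that this series collapses. The correct mechanism is the moment duality \eqref{duality}: the future-data likelihood is (a multiple of) $h(\cdot,\nn)$ with $\nn$ the future multiplicities, and the dual death process started at $\nn$ lives on $\{\mm:\oo\le\mm\le\nn\}$, which is what yields the finite representation $p(\nn\mid \xx_{t_i})=m(\nn)\sum_{\oo\le\mm\le\nn}p_{\nn,\mm}(t)h(\xx_{t_i},\mm)$ of Lemma \ref{lemma: predictive likelihood} (and, at the measure-valued level, Proposition \ref{prop: nonpar backward}). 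Without invoking this duality, or re-deriving the cancellation directly, your induction does not close; your later remark that the transition ``can produce at most as many distinguishable lineages as it receives'' is again the conclusion of the duality argument, not a property visible in \eqref{FV transition}. The same gap recurs in the DW case, where moreover the rate offset $b$ is not an ad hoc accumulation of Poisson factors but comes from the deterministic dual component $C_t$ in \eqref{deterministic}, giving $\beta+C^{c_{i-1}}_{\Delta_i}+c_i+C^{c_{i+1}}_{\Delta_{i+1}}$ as in \eqref{beta con due delta}.

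A secondary, more technical omission: you manipulate conditional laws given the exact observed values directly on the space of measures, but when $P_0$ is nonatomic these are conditionings on null events, and the reweighting of a mixture of $\Pi_{\alpha+\cdot}$ by monomials $\prod_j x(\{y_j\})^{k_j}$ has to be justified for a specific version of the regular conditional distribution. The paper handles this by binning the observations into increasingly fine partitions, using finite-dimensional projections, and passing to the limit via martingale convergence and de Finetti's theorem (proof of Theorem \ref{FV one-step-smoothing}); this is also where the qualitative difference between discrete and nonatomic $P_0$ in the limiting weights emerges. Your sketch would need either this projective argument or an explicit verification that the Dirichlet/gamma posterior-update identities you invoke hold for the chosen version of the conditional law.
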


The previous results states that all smoothing distributions associated to conditional FV and DW diffusions, given samples from the respective past, present and future populations, are finite mixtures of laws of Dirichlet and gamma random measures respectively. 
The later sections will present all the details that characterize the elements displayed above, among which the precise form of the mixture weights, which have a recursive nature, and the fact that the mixture components carry only part of the information available from the dataset, through the atomic components 
$\sum_{j = 1}^{K}m_{j}\delta_{y_{j}}$ and $\sum_{j = 1}^{K'}m_{j}\delta_{y_{j}'}$ of the parameter measures.
An interesting implication of our results emerges when $\alpha$ is nonatomic, i.e., when the mutant offsprings in the population underlying the measure-valued diffusions are new types with probability one. In this case, the smoothing distributions become highly informative from an inferential perspective, and provide an automatic upweighting of the information which emerged at more than one collection time. Specifically, mixture components which carry types observed at multiple times (as atoms in the parameter measure) end up having weights of a greater order of magnitude with respect to the others. This provides valuable information on the population lineage structure and is consistent with the expected behaviour of a smoothing distribution, supposed to remove some of the noise incorporated in the previous estimates.

{Another  byproduct of our results is the availability of the conditional laws for further samples from the population at a certain time, e.g., in the context of \eqref{summary theorem FV}, the laws of $Y'_{t_{i}} |  \YY_{t_{0:N}}$, of $Y''_{t_{i}} |  \YY_{t_{0:N}},Y'_{t_{i}}$, and so on}. We identify these distributions, which extend results obtained in \cite{ALR20} and are shown for both models to take the form of mixtures of generalized P\'olya urn schemes with time-dependent weights, in the DW case conditional on a latent variable sampled from a mixture of negative binomial densities.

Note that a standard approach for obtaining the above smoothing distributions would in principle entail computing the marginal law of the signal at time $t_{n}$ via $\int \xi_{t_{n-1}}(\d x) P_{t_{n}-t_{n-1}}(x,\d x')$
(cf.~\eqref{forward operator} below)
where $P_{t}$ is the transition function of the signal and $\xi_{t_{n-1}}$ here stands for the law of the signal at time $t_{n-1}$ given some past data, the output of which must then be conditioned to future data points. These operations, carried out directly, are particularly troublesome for various reasons, the most prominent being the fact that the transition functions \eqref{FV transition} and \eqref{DW transition} are intractable (in fact $d_{m}(t)$ itself has an infinite series expansion; see \cite{JS17}). It is thus unfeasible to apply Bayes' Theorem directly for updating with new observations in this framework. Theorem \ref{summary theorem} instead shows that with the two models we consider, by carefully combining arguments that exploit the duality properties of the signals and the projective properties of the laws involved, the intractable measure given by conditioning the above integral to future observations can in fact be reduced to a finite mixture whose computation can therefore be carried out exactly.



\subsection{General notation and organisation of the paper}

We will take as sampling (or \emph{type}) space a complete and separable metric space $\Y$, so that the DW process takes values on the space of finite and non-null atomic measures on $\Y$ and the FV on its subspace of probability measures. We will denote the FV and DW states by $X_{t}$ and $Z_{t}$ respectively, while the corresponding bold capitals $\XX_{t}$ and $\ZZ_{t}$ will later denote the respective $K$-dimensional respective projections onto measurable partitions of $\Y$, at each fixed $t$. The main parameter for both \eqref{FV transition} and \eqref{DW transition} is a finite and non-null measure $\alpha$ on $\Y$, with total mass $\theta=\alpha(\Y)$ and normalized version $P_{0}=\alpha/\alpha(\Y)$. 
Integer vectors of multiplicities will be denoted by $\mm,\nn,\ldots$, with
$\mm = (m_1, \dots, m_K) \in \Z^K_+$ and $ |\mm| = \sum_{j = 1}^Km_j$, such that $\mm \leq \nn$ if and only if $m_j \leq n_j$ for every $j$. The symbol $\oo$ will denote the vector containing all zeros. These multiplicities will sometimes carry a subscript to indicate to what collection time they refer to, e.g., $\nn_{i}$ will be a shorthand notation for $(n_{t_{i},1},\ldots,n_{t_{i},K})$  referred to time $t_{i}$.
Similarly, observations from \eqref{FV HMM} collected at time $t_{i}$ will be denoted $\YY_{i}$ for $m$ samples $Y_{t_{i}}^{1},\ldots,Y_{t_{i}}^{m}$, with $m$ given. We will not index $\YY_{i}$ by the sample size $m$ as the information available will be encoded in the distinct types $y_{1},\ldots,y_{K}$ observed  and their respective multiplicities $\nn_{i}$. The notation for a collection of observations from model \eqref{DW HMM} is a bit more involved and will be described in detail in Section \ref{sec: DW prelim}.

The rest of the paper is organised as follows. In Section \ref{sec:operators} we define certain operators on measures which are useful for computing several quantities of interest for finite-dimensional signals. These will then be exploited in the subsequent sections for obtaining our results on the measure-valued signals. In Sections \ref{sec:FV} and \ref{sec:DW} we study the FV and DW cases respectively, where each section starts with preliminary results particularly focussed on the finite-dimensional model counterparts, to then derive the smoothing distributions in full detail and conclude by discussing the implications on conditional sampling of further observations from the population.


\section{Some operators on measures}\label{sec:operators}

To obtain the smoothing distributions for the two measure-valued signals discussed in the introduction, we are going to exploit the projective properties of the FV and the DW process. To this end, we need to set a few tools that ease notation and computation for the respective finite-dimensional counterparts. 

Consider a Markov process $\XX$ on $\R^{K}$, $K<\infty$, with transition function $P_{t}$ and initial distribution $\nu$. We assume $P_{t}$ is reversible with reversible measure $\nu_{0}$. In this section we regard $\XX$ as generic, but as anticipated above this setting will be used to model finite-dimensional projections of the measure-valued diffusions $X_{t}$ and $Z_{t}$. The dimension $K$ can therefore be thought of as representing the number of cells in which types in the population have been grouped or binned. Accordingly, \emph{iid} observations collected at time $t$ given $\XX_{t}=\xx_{t}$ generate multiplicities associated to the $K$ groups which can be encoded into a vector $\nn\in \Z_{+}^{K}$, whose associated density is $p(\nn|\xx_{t})$. We can then define the following operators acting on measures $\xi$:
\begin{itemize}
\item \emph{Update}: 
\begin{equation}\label{update operator}
\U_{\nn}(\xi)(\d \xx)
:=\frac{p(\nn|\xx)\xi(\d \xx)}{p_{\xi}(\nn)}
,\quad \quad 
p_{\xi}(\nn):=\int p(\nn|\xx)\xi(\d \xx).
\end{equation} 
This provides the conditional measure $\xi$ given observations with associated multiplicities $\nn$. It is analogous to Bayes' Theorem with $\xx$ acting as the random parameter and $\xi$ as its prior: hence, $\U_{\nn}(\xi)$ yields the posterior, whereas the denominator $p_{\xi}(\nn)$ is the marginal likelihood of $\nn$ obtained by integrating out the random parameter $\xx$. Here the multiplicities $\nn$ are observed at the same time $\xx$ refers to, as in \eqref{FV HMM} and \eqref{DW HMM}.

\item \emph{Forward propagation}: 
\begin{equation}\label{forward operator}
\F_{t}(\xi)(\d \xx'):=\xi P_{t}(\d \xx')=\int \xi(\d\xx)P_{t}(\xx, \d\xx').
\end{equation} 
This yields the unconditional measure of $X_{s+t}$ if $\xi$ is that of $X_{s}$, once the initial state is integrated out. 

\item \emph{Backward propagation}:
\begin{equation}\label{backward operator}
\B_{t}(\xi)(\d \xx'):=\xi Q_{t}(\d \xx')
=\int \xi(\d \xx)Q_{t}(\xx,\d \xx').
\end{equation} 
It is the {forward} propagation obtained by using the transition function of the time reversal of the signal, denoted here $Q_t$, and yields the unconditional measure of $X_{s-t}$ if $\xi$ is that of $X_{s}$, once $X_{s}$ is integrated out.


\end{itemize}

With a slight abuse of notation, when $\xi$ is dominated by a sigma-finite measure $\mu$ on $\R^{K}$, {we 
specialize} the previous operators as acting on densities, e.g., if $\xi(\d \xx)=f(\xx)\mu(\d \xx)$, then {$\U_{\nn}(f)(\xx)
	:=p(\nn|\xx)f(\xx)/p_{f}(\nn)$ and $p_{f}(\nn):=\int p(\nn|\xx)f(\xx)\mu(\d \xx)$,}
and similarly for \eqref{forward operator} and \eqref{backward operator}.

Note that the specific form of the backward transition is not necessary for our treatment, as we will leverage on Bayes' Theorem. See, e.g., Proposition \ref{prop: nonpar backward} below.

\begin{remark} 
Expanding on the above, and assuming all probability distributions of interest are dominated by $\mu$, one could define a \emph{smoothing operator} acting on two densities $f_{s},f_{u}$, indexed by $s<u$, by letting
\begin{equation}\label{smoothing_op}
\S_{s,t,u}^{\nn}(f_{s},f_{u}) (\xx) :=
C\ \F_{t-s}(f_s)(\xx) \B_{u-t}(f_{u})(\xx) \,\U_{\nn}(f_{0})(\xx)/f_{0}(\xx)^2, 
\end{equation}
for every $\xx$ such that $f_{0}(\xx)>0$, where $s<t<u$, $f_{0}$ is the density of $\nu_{0}$ with respect to $\mu$ and $C$ is a normalising constant that makes the left hand side a density. 
This yields the distribution of $\XX_{t}$, given observations at time $t$, if $X_{s}\sim f_{s}$ and $X_{u}\sim f_{u}$, obtained by jointly propagating $\XX_{s}$ forward of a $t-s$ interval, $\XX_{u}$ backward  of a $u-t$ interval, and then conditioning on observations collected at time $t$. 
The rationale of this operator can be outlined by considering that, for $t_{0}<t_{1}<t_{2}$, if $y_{t_{1}} |  \xx_{t_{1}}\sim p(y_{t_{1}} |  \xx_{t_{1}})$, we have
\[
\begin{aligned}
p(\xx_{t_{1}} | \xx_{t_{0}}, y_{t_{1}}, \xx_{t_{2}}) 
&\propto p(\xx_{t_{1}} | \xx_{t_{0}})p(y_{t_{1}} | \xx_{t_{1}}, \xx_{t_{0}})p(\xx_{t_{2}} | \xx_{t_{1}}, \xx_{t_{0}}, y_{t_{1}}) 
\\
&= p(\xx_{t_{1}} | \xx_{t_{0}})p(y_{t_{1}} | \xx_{t_{1}})p(\xx_{t_{2}} | \xx_{t_{1}})
\end{aligned}
\]
 where
 we have used the fact that conditionally on $\xx_{t_{1}}$, $y_{t_{1}}$ is independent on everything else, together with the Markov property.
By virtue of Bayes' Theorem, we now have {$p(\xx_{t_{2}} | \xx_{t_{1}})=p(\xx_{t_{2}})p(\xx_{t_{1}} | \xx_{t_{2}})/p(\xx_{t_{1}})$ and $p(\xx_{t_{1}} | y_{t_{1}})=p(\xx_{t_{1}})p(y_{t_{1}}| \xx_{t_{1}})/
p(y_{t_{1}})$ 
whereby the previous expression is proportional to $p(\xx_{t_{1}} | \xx_{t_{0}})p(\xx_{t_{1}} | \xx_{t_{2}})p(\xx_{t_{1}}| y_{t_{1}})/p(\xx_{t_{1}})^{2}.$}
This operator will not be essential for our calculations of the next sections, but it can provide a unified treatment of the previous operators applied at stationarity. In fact, we have the following special cases:
\begin{equation}\nonumber
\begin{aligned}
\S_{s,t,u}^{\oo}(f_{s},f_{0})=\F_{t-s}(f_s), \quad \quad 
\S_{s,t,u}^{\oo}(f_{0},f_{u})=\B_{u-t}(f_u),\quad \quad 
\S_{s,t,u}^{\nn}(f_{0},f_{0})=\U_{\nn}(f_{0}).
\end{aligned}
\end{equation} 
\end{remark}

Appropriate compositions of the above operators allow to represent all quantities of interest in this framework. For example {$p (\xx_{t_{i}}  |  \nn_{{i-1}}, \nn_{{i}}, \nn_{{i+1}} ) = \S^{\nn_{{i}}}_{t_{i-1},t_{i},t_{i+1}} (\U_{\nn_{i-1}}(f_{0}), \U_{\nn_{i+1}}(f_{0}) )$}
identifies the distribution of $\XX_{t_{i}}$ given observations at times $t_{i-1},t_{i},t_{i+1}$, obtained by first updating the stationary measure at times $t_{i-1},t_{i+1}$ given observations with multiplicities $\nn_{t-1},\nn_{t_{i+1}}$ respectively, then propagating both distributions to the intermediate time $t_{i}$, and finally updating the output of the last operation given the multiplicities observed at time $t_{i}$. This type of distributions will be instrumental for the proof of Theorem \ref{summary theorem}. 

\section{Fleming--Viot signals}\label{sec:FV}

\subsection{Preliminary results}\label{sec: FV preliminaries}

In this section we assume the signal is a FV process and the observation model \eqref{FV HMM} holds. 
A projection of the Dirichlet law in \eqref{DP law} onto a measurable partition $(A_1, \dots, A_K)$ of the sampling space $\Y$ yields a Dirichlet distribution with parameters $(\alpha(A_{1}),\ldots,\alpha(A_{K}))$, whose density with respect to the Lebesgue measure on the $(K-1)$-dimensional simplex, denoted $\pi_{\aa}(\xx)$, $\aa =  (\alpha(A_1), \dots , \alpha(A_K) )$, is proportional to $\xx^{\aa-\textbf{1}}:=x_1^{\alpha(A_1)-1} \dots \, x_K^{\alpha(A_K)-1}$. With a little abuse of notation, we will use the symbol $\pi_{\aa}$ to denote both the Dirichlet density and the corresponding measure. Similarly, a projection of a FV process with transition \eqref{FV transition} onto the same partition yields a Wright--Fisher diffusion, denoted $\text{WF}_{\aa}$, with transition function
\begin{equation}\label{WF transition}
P_t(\xx, \d \xx') = \sum_{m=0}^\infty d_m(t)\sum_{\mm \in \Z_+^K: \, |\mm| = m}\xx^\mm \binom{m}{\mm}\pi_{\aa+\mm}({\d \xx'}),
\end{equation}
and $d_{m}(t)$ as in \eqref{FV transition}, which has reversible distribution $\pi_{\aa}$. We will denote by $p_{t}(\cdot\,|\xx)$ the corresponding density function. 
In this scenario, \eqref{FV HMM} reduces to 
\begin{equation}\label{WF HMM}
Y_{t}^i|\XX_{t}=\xx\overset{\text{iid}}{\sim} \,\text{Categorical}(\xx), \quad \XX \sim \, \text{WF}_{\aa},
\end{equation} 
whereby for each $i$, $Y_{t}^{i}=j$ with probability $x_{j}$, for $j=1,\ldots,K$, and the update operator \eqref{update operator} yields the familiar Bayesian update for Dirichlet distributions $\U_{\nn}(\pi_{\aa})=\pi_{\aa+\nn}$.
It is useful to note for later reference that in \eqref{update operator} the marginal likelihood is
\begin{equation}\label{dirichlet categorical}
m(\nn) := p_{\pi_{\alpha}}(\nn)
= \frac{B(\aa+\nn)}{B(\aa)},\quad \quad 
B(\aa):=\frac{\prod_{j=1}^{K}\Gamma(\alpha_{j})}{\Gamma(|\aa|)}
\end{equation} 
often called Dirichlet-Categorical distribution. 
Define now 
\begin{equation}\label{h function}
h(\xx,\nn):=\frac{p(\nn|\xx)}{m(\nn)}
=\frac{B(\aa)}{B(\aa+\nn)}\xx^{\nn},\quad \quad 
\nn\in \Z_{+}^{K},
\end{equation} 
where $p(\nn|\xx)$ is the categorical likelihood in \eqref{WF HMM} expressed in terms of multiplicities of types. 
It will also be useful to note that for $\nn,\mm\in\Z_{+}^{K}$, we have
\begin{equation}\label{product_h}
h(\xx, \nn)h(\xx, \mm) = c(\nn,\mm)h(\xx, \nn+ \mm)
\end{equation} 
where
\begin{equation}\label{c(n,m)}
c(\nn,\mm)=\frac{m(\nn+\mm)}{m(\nn)m(\mm)}
=\frac{B(\aa)B(\aa+\mm+\nn)}{B(\aa+\nn)B(\aa+\mm)}.
\end{equation} 

Recall now that the WF diffusion is known to have moment-dual given by Kingman's \emph{typed} coalescent. More specifically, let $\MM_{t}$ be a death process on $\Z_{+}^{K}$ with rates {$\lambda_{\mm}=m_{j}(\theta+|\mm|-1)/2$} from $\mm$ to $\mm-\ee_{j}$, where $\ee_j$ is the canonical vector in direction $j$. Then the following duality identity
\begin{equation}\label{duality}
\E[h(\XX_{t},\mm)| \XX_{0}=\xx]
=
\E[h(\xx,\MM_{t})| \MM_{0}=\mm]
\end{equation} 
holds with $h$ as in \eqref{h function}. We will denote by 
$p_{\nn,\mm}(t)$ the transition probabilities of $\MM_t$ (cf.~\cite{PRS16}, Section 4.2). 
The above duality is used to prove the following result, needed later. 

\begin{lemma}
\label{lemma: predictive likelihood}
Let $\nn_{i+1}$ be the multiplicities observed at time $t_{i+1}$. Then
\begin{equation}\label{WF observation prediction}
p(\nn_{i+1}|\xx_{t_{i}})
=m(\nn_{i+1})\sum_{\oo\le\mm\le \nn_{i+1}}p_{\nn_{i+1},\mm}(t_{i+1}-t_{i})h(\xx_{t_{i}},\mm).
\end{equation} 
\end{lemma}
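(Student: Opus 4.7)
The plan is to rewrite the conditional likelihood $p(\nn_{i+1} \mid \xx_{t_i})$ as a conditional expectation under the WF transition and then flip it to a sum over the dual death process via the moment-duality \eqref{duality}.

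First I would use the Markov property of $\XX$ together with the conditional independence of the observations given the signal state at their collection time. Since given $\XX_{t_{i+1}} = \xx_{t_{i+1}}$ the counts $\nn_{i+1}$ depend on nothing else, we have
\begin{equation}\nonumber
p(\nn_{i+1} \mid \xx_{t_i}) = \int p(\nn_{i+1} \mid \xx_{t_{i+1}}) \, p_{t_{i+1}-t_i}(\xx_{t_{i+1}} \mid \xx_{t_i}) \, \d \xx_{t_{i+1}} = \E\bigl[p(\nn_{i+1} \mid \XX_{t_{i+1}-t_i}) \,\big|\, \XX_0 = \xx_{t_i}\bigr],
\end{equation}
using time-homogeneity in the last equality.

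Next I would rewrite the integrand in terms of $h$. By \eqref{h function}, $p(\nn_{i+1} \mid \xx) = m(\nn_{i+1}) h(\xx, \nn_{i+1})$, so pulling the constant $m(\nn_{i+1})$ out of the expectation gives
\begin{equation}\nonumber
p(\nn_{i+1} \mid \xx_{t_i}) = m(\nn_{i+1}) \, \E\bigl[h(\XX_{t_{i+1}-t_i}, \nn_{i+1}) \,\big|\, \XX_0 = \xx_{t_i}\bigr].
\end{equation}

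Now I would apply the duality identity \eqref{duality} with roles of $\xx$ and $\mm$ swapped in the natural way: the expectation above equals $\E[h(\xx_{t_i}, \MM_{t_{i+1}-t_i}) \mid \MM_0 = \nn_{i+1}]$. Because $\MM_t$ is a pure death process on $\Z_+^K$ started at $\nn_{i+1}$, its state at time $t_{i+1}-t_i$ is supported on $\{\mm : \oo \le \mm \le \nn_{i+1}\}$, so expanding the expectation as a sum against the transition probabilities $p_{\nn_{i+1},\mm}(t_{i+1}-t_i)$ yields exactly \eqref{WF observation prediction}.

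The argument is short and no step is really an obstacle; the only thing to be careful about is the direction in which duality is invoked (propagating the signal forward versus propagating the dual backward in the number of types), and the observation that $h(\xx, \cdot)$ is precisely the duality function linking the categorical likelihood $p(\nn \mid \xx)$ to the Kingman-coalescent-type death process, which is what makes the identity collapse to a finite sum.
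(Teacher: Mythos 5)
Your argument is correct and follows essentially the same route as the paper's proof: decompose $p(\nn_{i+1}\mid\xx_{t_i})$ by integrating the categorical likelihood against the WF transition density, rewrite $p(\nn_{i+1}\mid\xx)=m(\nn_{i+1})h(\xx,\nn_{i+1})$ via \eqref{h function}, and invoke the duality \eqref{duality} to turn the forward expectation into a finite sum over the states $\oo\le\mm\le\nn_{i+1}$ of the dual death process. The only difference is that you spell out the final expansion of the dual expectation against $p_{\nn_{i+1},\mm}(t_{i+1}-t_i)$, which the paper leaves implicit.
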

\begin{proof}
Using \eqref{h function}, we have
\begin{equation}\nonumber
\begin{aligned}
p(\nn_{i+1}|\xx_{t_{i}})
=&\,\int p(\nn_{i+1} |\xx_{t_{i+1}})p_{t_{i+1}-t_{i}}(\xx_{t_{i+1}}|\xx_{t_{i}})\d \xx_{t_{i+1}} \\
=&\,m(\nn_{i+1})\int h(\xx_{t_{i+1}},\nn_{i+1})p_{t_{i+1}-t_{i}}(\xx_{t_{i+1}}|\xx_{t_{i}})\d \xx_{t_{i+1}}\\
=&\,
m(\nn_{i+1})\E[h(\XX_{t_{i+1}},\nn_{i+1})| \XX_{t_{i}}=\xx_{t_{i}}],
\end{aligned}
\end{equation} 
where the integral is 
 over the $(K-1)$-dimensional simplex, from which \eqref{duality} leads to the result.
\end{proof}

The next Lemma formalizes the fact that a backward propagation, after a change of measure with respect to the stationary distribution, yields an analogous distributional result to a forward propagation, somehow carrying over the reversibility of FV processes to their conditional versions. In the following statement, the forward and backward operators $\F_{t},\B_{t}$ are applied to laws of FV states by extension of \eqref{forward operator}-\eqref{backward operator}, with $P_{t}$ as in \eqref{FV transition}.

\begin{proposition}
\label{prop: nonpar backward}
Assume \eqref{FV HMM}, let $y_{1},\ldots,y_{K}$ be distinct values and 
$\nn\in\Z_{+}^{K}$. Then\\ {$\F_{t}(\Pi_{\alpha+\sum_{j = 1}^{K}n_j\delta_{y_j}}) = 
	\B_{t}(\Pi_{\alpha+\sum_{j = 1}^{K}n_j\delta_{y_j}}),$}
with $\F_{t},\B_{t}$ as in \eqref{forward operator}-\eqref{backward operator}, 
and in particular
\begin{equation}\label{backward propagation FV}
\B_{t} \Big(\Pi_{\alpha+\sum_{j = 1}^{K}n_j\delta_{y_j}} \Big) 
=  \sum_{\oo \leq \kk \leq \nn} p_{\nn, \kk}(t)\Pi_{\alpha+\sum_{j = 1}^{K}k_j\delta_{y_j}}.
\end{equation} 
\end{proposition}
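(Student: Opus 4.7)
The plan is twofold: first, to reduce $\B_t$ to $\F_t$ by exploiting the reversibility of the FV diffusion; second, to compute $\F_t(\Pi_{\alpha+\sum_j n_j\delta_{y_j}})$ by projecting onto finite-dimensional partitions and invoking the WF duality \eqref{duality}.

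For the first equality, I would use that $\Pi_\alpha$ is a reversible measure for $P^{\text{FV}}_t$ in \eqref{FV transition}, so the time-reversed stationary transition $Q_t$ coincides with $P^{\text{FV}}_t$ itself. Hence for any measure $\xi$ one has $\B_t(\xi)=\int\xi(\d x)Q_t(x,\cdot)=\int\xi(\d x)P^{\text{FV}}_t(x,\cdot)=\F_t(\xi)$, which in particular gives $\B_t(\Pi_{\alpha+\sum_j n_j\delta_{y_j}})=\F_t(\Pi_{\alpha+\sum_j n_j\delta_{y_j}})$.

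For the explicit formula, I would use that a random probability measure on $\Y$ has law $\Pi_\beta$ if and only if its projection onto every measurable partition $(A_1,\ldots,A_L)$ of $\Y$ follows $\pi_{(\beta(A_1),\ldots,\beta(A_L))}$. Fixing such a partition, setting $\aa_i=\alpha(A_i)$ and $\tilde n_i=\sum_{j:\,y_j\in A_i}n_j$, the projected signal is $\text{WF}_\aa$ with transition \eqref{WF transition}, and $\Pi_{\alpha+\sum_j n_j\delta_{y_j}}$ projects to $\pi_{\aa+\tilde\nn}$. Writing $\pi_{\aa+\tilde\nn}(\xx)=\pi_\aa(\xx)h(\xx,\tilde\nn)$ via \eqref{h function}, using the reversibility of $\text{WF}_\aa$ with respect to $\pi_\aa$ to swap the transition under the integral, and then applying \eqref{duality}, I would get
\[
\F^{\text{WF}}_t(\pi_{\aa+\tilde\nn})(\xx')=\pi_\aa(\xx')\,\E[h(\XX_t,\tilde\nn)\mid \XX_0=\xx']=\sum_{\oo\le\kk\le\tilde\nn}p_{\tilde\nn,\kk}(t)\,\pi_{\aa+\kk}(\xx').
\]

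The remaining step, which I expect to be the main technical point, is matching this with the projection of the claimed right-hand side, namely $\sum_{\mm\le\nn}p_{\nn,\mm}(t)\pi_{\aa+\tilde\mm}$. This reduces to the lumpability identity $\sum_{\mm:\,\tilde\mm=\kk}p_{\nn,\mm}(t)=p_{\tilde\nn,\kk}(t)$, which holds because the death rates $m_j(\theta+|\mm|-1)/2$ depend on $\mm$ only through $m_j$ and $|\mm|$, both preserved under coordinate aggregation; the aggregated chain is therefore Markov on $\Z_+^L$ with the same rate structure. Once this is verified, both sides of \eqref{backward propagation FV} project identically for every partition, and uniqueness of Dirichlet random measures closes the argument.
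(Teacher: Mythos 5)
Your proposal is correct, but it follows a genuinely different route from the paper's. For the identity $\F_{t}=\B_{t}$ you invoke reversibility of the FV process at stationarity, so that the reversed kernel $Q_{t}$ coincides with $P^{\text{FV}}_{t}$ and the equality holds for \emph{any} input law; the paper deliberately avoids using the form of $Q_{t}$ (it remarks that "the specific form of the backward transition is not necessary") and instead computes $\B_{t}(\pi_{\aa+\nn})$ via Bayes' theorem combined with Lemma \ref{lemma: predictive likelihood}, then matches it with the forward formula of \cite{PR14}. For the explicit mixture \eqref{backward propagation FV} you essentially re-prove the ingredients the paper cites: your detailed-balance-plus-duality computation of $\F_{t}(\pi_{\aa+\tilde\nn})$ reproduces the \cite{PR14} propagation formula, and your lumpability argument for the typed death process (the aggregate rate into $\tilde\kk-\ee_{i}$ is $\tilde k_{i}(\theta+|\tilde\kk|-1)/2$, so Dynkin's criterion applies and $\sum_{\mm:\,\tilde\mm=\kk}p_{\nn,\mm}(t)=p_{\tilde\nn,\kk}(t)$), together with the fact that laws of random probability measures are determined by their projections onto finite partitions, replaces the appeal to Theorem 3.1 of \cite{PRS16}. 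Your phrasing of the lumpability step is slightly loose ($m_{j}$ itself is not preserved under aggregation; what matters is that the rates sum within blocks to the same functional form), but the claim and its verification are sound. What each approach buys: yours is more self-contained and makes the projective consistency of the mixture explicit, at the cost of redoing known results; the paper's Bayes-based derivation is shorter given the cited literature and, importantly, transfers verbatim to the Dawson--Watanabe case (Proposition \ref{prop: signal prediction CIR}), where the relevant laws $\Gamma^{\beta+c}_{\cdot}$ are not stationary for the signal and a bare "reversibility gives $Q_{t}=P_{t}$, done" shortcut would not by itself yield the explicit formula with the updated rate $\beta+C_{t}$.
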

\begin{proof}
Denote by $\nn_{i-1}$ and $\nn_{i+1}$ the multiplicities of types for observations sampled at times $t_{i-1}$ and $t_{i+1}$ respectively in the setting of \eqref{WF HMM}. Then \cite{PR14} showed that
\begin{equation}\label{forward prediction}
\F_{t_{i}-t_{i-1}}(\pi_{\aa+\nn_{i-1}})(\xx_{t_{i}})
=
\sum_{\oo\le\kk\le \nn_{i-1}}p_{\nn_{i-1},\kk}(t_{i+1}-t_{i})\pi_{\aa+\kk}(\xx_{t_{i}}).
\end{equation} 
Furthermore, using \eqref{update operator} first and then Lemma \ref{lemma: predictive likelihood} we have
\begin{equation}\nonumber
\begin{aligned}
\B_{t_{i+1}-t_{i}}&\,(\pi_{\aa+\nn_{i+1}})(\xx_{t_{i}})
=p(\xx_{t_{i}}|\nn_{i+1})
= \frac{p(\xx_{t_{i}})p(\nn_{i+1}| \xx_{t_{i}})}{m(\nn_{i+1})}\\
=&\,p(\xx_{t_{i}})\sum_{\oo\le\kk \le \nn_{i+1}}p_{\nn_{i+1},\kk}(t_{i+1}-t_{i})h(\xx_{t_{i}},\kk)\\
=&\,\sum_{\oo\le\kk\le \nn_{i+1}}p_{\nn_{i+1},\kk}(t_{i+1}-t_{i})h(\xx_{t_{i}},\kk)p(\xx_{t_{i}})\\
=&\,
\sum_{\oo\le\kk\le \nn_{i+1}}p_{\nn_{i+1},\kk}(t_{i+1}-t_{i})\pi_{\aa+\kk}(\xx_{t_{i}})
\end{aligned}
\end{equation} 
where the last identity follows from from \eqref{h function} and \eqref{update operator}. By equating $\nn_{i-1}$ with $\nn_{i+1}$ and $t_{i}-t_{i-1}$ with $t_{i+1}-t_{i}$, one can now see that $\B_{t}(\pi_{\aa+\nn})=\F_{t}(\pi_{\aa+\nn})$, 
with $\B_{t}$ as in \eqref{backward operator}. The fact that $\F_{t}(\Pi_{\alpha+\sum_{j = 1}^{K}n_{j}\delta_{y_j}})$ equals the right hand side of \eqref{backward propagation FV} now follows from Theorem 3.1 in \cite{PRS16}, and the same proof can be used to show \eqref{backward propagation FV}, by seeing $\B_{t}(\pi_{\aa+\nn})$ as the projection of $\B_{t}(\Pi_{\alpha+\sum_{j = 1}^{K}n_j\delta_{y_j}})$ onto an arbitrary partition, from which the first statement also follows.
\end{proof}

Thanks to this equivalence between backward and forward propagation, we have that the same result of Lemma \ref{lemma: predictive likelihood} holds with $\nn_{i+1}$ replaced by $\nn_{i-1}$, i.e., referred to time $t_{i-1}$, leading to the expression obtained by replacing $t_{i+1}-t_{i}$ with $t_{i}-t_{i-1}$ in the right hand side of \eqref{WF observation prediction}.

To conclude the section, we can interpret \eqref{backward propagation FV} as providing a sort of \emph{distributional bridge} that may be of independent interest. The heart of the intuition can be easily appreciated by considering a single observation $y\in \Y$ collected at time $s$, whereby conditioning \eqref{DP law} on $y$ adds a point mass at $y$ to the parameter measure $\alpha$ and 
{$\B_{t} (\Pi_{\alpha+\delta_{y}}) 
= p_{1,0}(t)\Pi_{\alpha}+p_{1,1}(t)\Pi_{\alpha+\delta_{y}}.$}
Hence, an interval $t$ prior to observing $y$, the distribution of $X_{s-t}$ has law as in \eqref{DP law} which carries a point mass at $y$ with probability $p_{1,1}(t)$. This is the probability that a single lineage in the underlying coalescent is not lost by mutation in an interval $t$. Furthermore, since $p_{1,1}(t)=e^{-\theta t/2}\rightarrow 1$ as $t\rightarrow 0$, we have $\B_{t} (\Pi_{\alpha+\delta_{y_{0}}})\rightarrow \Pi_{\alpha+\delta_{y_{0}}}$ in total variation, i.e., the probability mass progressively concentrates on the component which carries the observed atom. More generally, we can state the following implication of Proposition \ref{prop: nonpar backward}, whose proof is omitted being a simple extension of the previous argument.

\begin{corollary}
In the context of Proposition \ref{prop: nonpar backward}, we have {$\B_{t} (\Pi_{\alpha+\sum_{i = 1}^{K}n_i\delta_{y_i}}) 
\rightarrow 
	\Pi_{\alpha+\sum_{i = 1}^{K}n_i\delta_{y_i}}$ in total variation as $t\to 0$}.
\end{corollary}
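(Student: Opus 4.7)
The plan is to exploit directly the finite mixture representation \eqref{backward propagation FV} supplied by Proposition \ref{prop: nonpar backward} and show that as $t\downarrow 0$ all the mixture mass concentrates on the component indexed by $\kk=\nn$. Writing $\nu_{\kk} := \Pi_{\alpha+\sum_{j = 1}^{K}k_{j}\delta_{y_{j}}}$ for shorthand, \eqref{backward propagation FV} yields
\[
\B_{t}(\nu_{\nn})-\nu_{\nn}
= (p_{\nn,\nn}(t)-1)\nu_{\nn} + \sum_{\oo\le\kk<\nn} p_{\nn,\kk}(t)\,\nu_{\kk},
\]
so by the triangle inequality for the total variation norm the claim reduces to showing $p_{\nn,\nn}(t)\to 1$ as $t\to 0$.

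This last fact is immediate from the structure of the death process $\MM_{t}$ introduced before \eqref{duality}. The total instantaneous rate of leaving the state $\nn$ equals $\sum_{j=1}^{K} n_{j}(\theta+|\nn|-1)/2 = |\nn|(\theta+|\nn|-1)/2$, so the holding time at $\nn$ is exponential with that parameter and
\[
p_{\nn,\nn}(t) = \exp\Bigl(-\tfrac{|\nn|(\theta+|\nn|-1)}{2}\,t\Bigr) \longrightarrow 1 \quad \text{as } t\downarrow 0.
\]
Since the weights $p_{\nn,\kk}(t)$ sum to one, this implies $\sum_{\oo\le\kk<\nn} p_{\nn,\kk}(t) = 1 - p_{\nn,\nn}(t) \to 0$.

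Combining the two observations and using the trivial bound $\|\nu_{\kk}-\nu_{\nn}\|_{TV}\le 2$ for every $\kk$, the triangle inequality gives
\[
\|\B_{t}(\nu_{\nn})-\nu_{\nn}\|_{TV} \le (1-p_{\nn,\nn}(t))\,\|\nu_{\nn}\|_{TV} + \sum_{\oo\le\kk<\nn}p_{\nn,\kk}(t)\,\|\nu_{\kk}-\nu_{\nn}\|_{TV} \le 2(1-p_{\nn,\nn}(t)) \longrightarrow 0,
\]
which is the claim. There is essentially no obstacle here: Proposition \ref{prop: nonpar backward} has already done the heavy lifting of expressing the nonparametric backward semigroup as a finite mixture of Dirichlet laws with fixed parameters, and the only remaining ingredient is the elementary continuity at $0$ of a pure-jump Markov chain, which even yields the explicit exponential rate above.
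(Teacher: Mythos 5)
Your proof is correct and follows essentially the same route as the paper, which omits the argument as a simple extension of the single-observation case: use the finite mixture representation \eqref{backward propagation FV} and note that $p_{\nn,\nn}(t)\to 1$ as $t\to 0$, so all mass concentrates on the component carrying the observed atoms. Your explicit formula $p_{\nn,\nn}(t)=\exp\bigl(-|\nn|(\theta+|\nn|-1)t/2\bigr)$ correctly generalizes the paper's $p_{1,1}(t)=e^{-\theta t/2}$, and aside from a harmlessly redundant term in your intermediate triangle-inequality bound, the estimate $\|\B_{t}(\nu_{\nn})-\nu_{\nn}\|_{TV}\le 2(1-p_{\nn,\nn}(t))$ settles the claim.
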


Hence, conditioning on the knowledge of future observations from the underlying population yields a law for the current FV state that assigns gradually increasing probability to mixture components that carry the point masses associated to the observed values.


\subsection{Smoothing distributions}\label{subsec: FV smoothing}

Using the results of the previous section, the characterization of the smoothing distribution for conditional FV processes will be provided in three steps. First, in Theorem \ref{FV one-step-smoothing}, we show that conditioning on observations collected at adjacent times yields a finite mixture of laws of Dirichlet random measures. Then, in Proposition \ref{FV smoothing weights}, we give a full description of the mixture weights for different choices of the offspring distribution $P_{0}$ (cf.~\eqref{DP law}). Finally, in Proposition \ref{thm: general smoothing FV}, we show how the general expression of Theorem \ref{summary theorem} can be obtained by recursive computation based on the previous results.

We denote by $\YY_{i-1},\YY_i,\YY_{i+1}$ vectors of observations collected as in \eqref{FV HMM} at times $t_{i-1}$, $t_i$, $t_{i+1}$ respectively, with  associated multiplicities $\nn_{i-1},\nn_{i},\nn_{i+1}$ for the distinct values $(y_1, \dots , y_K)$ observed overall.

\begin{theorem}\label{FV one-step-smoothing}
Under model \eqref{FV HMM}, let $\YY_{i-1},\YY_i,\YY_{i+1}$ be as above. Then there exist weights summing up to one, denoted $w_{\kk_{i-1},\nn_{i}, \kk_{i+1}}(\Delta_i, \Delta_{i+1})$, for $\kk_{i-1}\le\nn_{i-1},\kk_{i+1}\le \nn_{i+1}$, such that
\begin{equation}\label{one step FV}
\begin{aligned}
\L(X_{t_{i}}&\, |  \YY_{i-1}, \YY_i, \YY_{i+1})=\\
=&\,\sum_{\oo \leq \kk_{i-1} \leq \nn_{i-1}}\sum_{\oo \leq \kk_{i+1} \leq \nn_{i+1}}
w^{\nn_{i-1},\nn_{i+1}}_{\kk_{i-1},\nn_{i}, \kk_{i+1}}(\Delta_{i}, \Delta_{i+1})
\Pi_{\alpha_{\kk_{i-1}+\nn_{i}+\kk_{i+1}}},
\end{aligned}
\end{equation} 
where $\Delta_{i} = t_{i}-t_{i-1}$, $\Delta_{i+1} = t_{i+1}-t_{i}$, and 
\begin{equation}\label{alpha conditional}
\alpha_{\kk_{i-1}+\nn_{i}+\kk_{i+1}}
=\alpha+\sum_{j = 1}^{K}(k_{i-1,j}+n_{i,j}+k_{i+1,j})\delta_{y_{j}}.
\end{equation} 
\end{theorem}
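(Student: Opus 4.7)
The plan is to apply Bayes' theorem combined with the Markov property of $X$ to factorise
\begin{equation*}
\L(X_{t_{i}}\,|\,\YY_{i-1},\YY_{i},\YY_{i+1})(\d x) \propto p(\YY_{i}\,|\,x)\,p(\YY_{i+1}\,|\,x)\,\L(X_{t_{i}}\,|\,\YY_{i-1})(\d x),
\end{equation*}
the proportionality constant being $p(\YY_{i},\YY_{i+1}\,|\,\YY_{i-1})$. Each of the three factors will then be expanded using the preparatory results of Section~\ref{sec: FV preliminaries}.

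For the filtering factor, Bayesian conjugacy gives $\L(X_{t_{i-1}}\,|\,\YY_{i-1}) = \Pi_{\alpha+\sum_{j}n_{i-1,j}\delta_{y_{j}}}$, and the measure-valued analogue of \eqref{forward prediction} (recorded as the first statement of Proposition~\ref{prop: nonpar backward}) propagates this forward by $\Delta_{i}$ to
\begin{equation*}
\L(X_{t_{i}}\,|\,\YY_{i-1}) = \sum_{\oo\le\kk_{i-1}\le \nn_{i-1}} p_{\nn_{i-1},\kk_{i-1}}(\Delta_{i})\, \Pi_{\alpha+\sum_{j}k_{i-1,j}\delta_{y_{j}}}.
\end{equation*}
For $p(\YY_{i+1}\,|\,x)$, I would lift Lemma~\ref{lemma: predictive likelihood} to the measure-valued setting by projecting $x$ onto the partition isolating the observed types, obtaining the analogous expansion in terms of $p_{\nn_{i+1},\kk_{i+1}}(\Delta_{i+1})\,h(\xx,\kk_{i+1})$, with $\xx=(x(\{y_{1}\}),\dots,x(\{y_{K}\}))$; the likelihood at time $t_{i}$ is simply $p(\YY_{i}\,|\,x) \propto \prod_{j} x(\{y_{j}\})^{n_{i,j}}$.

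Substituting and distributing the two finite sums produces summands indexed by $(\kk_{i-1},\kk_{i+1})$ of the form $p_{\nn_{i-1},\kk_{i-1}}(\Delta_{i})\,p_{\nn_{i+1},\kk_{i+1}}(\Delta_{i+1})$ times
\begin{equation*}
\prod_{j} x(\{y_{j}\})^{n_{i,j}+k_{i+1,j}}\, \Pi_{\alpha+\sum_{j}k_{i-1,j}\delta_{y_{j}}}(\d x),
\end{equation*}
up to scalars coming from $m(\nn_{i+1})$ and the $h$-normalisers in \eqref{h function}. Invoking Bayesian conjugacy of the Dirichlet random measure (verifiable on any Borel partition separating $y_{1},\dots,y_{K}$), this product rewrites as $C_{\kk_{i-1},\kk_{i+1}}\,\Pi_{\alpha_{\kk_{i-1}+\nn_{i}+\kk_{i+1}}}(\d x)$, with $\alpha_{\kk_{i-1}+\nn_{i}+\kk_{i+1}}$ as in \eqref{alpha conditional} and $C_{\kk_{i-1},\kk_{i+1}}$ a ratio of Beta functions analogous to \eqref{dirichlet categorical}. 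Collecting the $(\kk_{i-1},\kk_{i+1})$-dependent terms and renormalising gives the mixture \eqref{one step FV} with weights proportional to $p_{\nn_{i-1},\kk_{i-1}}(\Delta_{i})\,p_{\nn_{i+1},\kk_{i+1}}(\Delta_{i+1})\,C_{\kk_{i-1},\kk_{i+1}}$.

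The main obstacle is the clean bookkeeping of the Beta-function and $m(\cdot)$ constants arising from \eqref{h function}, \eqref{dirichlet categorical}, and the conjugacy identity, so that they combine into a single, $(\kk_{i-1},\kk_{i+1})$-dependent prefactor which together with the two transition probabilities yields weights summing to one after normalisation by $p(\YY_{i},\YY_{i+1}\,|\,\YY_{i-1})$. A secondary subtlety is the rigorous lifting of Lemma~\ref{lemma: predictive likelihood} and of the conjugacy identity from the finite-dimensional projections to the measure-valued setting, which is handled by the same projection argument invoked at the end of the proof of Proposition~\ref{prop: nonpar backward}.
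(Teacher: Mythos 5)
Your factorisation $\L(X_{t_{i}}|\YY_{i-1},\YY_{i},\YY_{i+1})(\d x)\propto p(\YY_{i}|x)\,p(\YY_{i+1}|x)\,\L(X_{t_{i}}|\YY_{i-1})(\d x)$ is the right heuristic, and in the dominated case (e.g.\ $P_{0}$ discrete and charging all observed values) your algebra does reproduce the correct mixture and, essentially, the weights of Proposition \ref{FV smoothing weights}A. The genuine gap is the step where you multiply the mixture $\sum_{\kk_{i-1}}p_{\nn_{i-1},\kk_{i-1}}(\Delta_{i})\Pi_{\alpha+\sum_{j}k_{i-1,j}\delta_{y_{j}}}$ by the ``likelihoods'' $p(\YY_{i}|x)\propto\prod_{j}x(\{y_{j}\})^{n_{i,j}}$ and $p(\YY_{i+1}|x)$ and then renormalise. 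Model \eqref{FV HMM} is not dominated: when $\alpha$ is nonatomic (the case the paper cares most about), any $y_{j}$ appearing in $\YY_{i}$, or surviving in $\kk_{i+1}$, that is not an atom of the component parameter satisfies $x(\{y_{j}\})=0$ almost surely under $\Pi_{\alpha+\sum_{j}k_{i-1,j}\delta_{y_{j}}}$, so the measure you propose to renormalise is identically zero. You are conditioning on an event of probability zero, and Bayes' theorem with densities returns $0/0$ rather than the conditional law; the same degeneracy hits your ``lifting'' of Lemma \ref{lemma: predictive likelihood}, since $h(\xx,\kk_{i+1})$ vanishes a.s.\ under components not carrying the relevant atoms.

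This is not the ``secondary subtlety'' you defer to a projection remark at the end of Proposition \ref{prop: nonpar backward}: it is the core of the paper's proof. There, one conditions on the data binned into refining partitions $\B_{n}$ (where densities exist and your manipulations are carried out verbatim, via Lemma \ref{lemma: predictive likelihood} and \eqref{product_h}), characterises the conditional law of the projected state through the predictive distributions of arbitrarily many further samples, identifies these as generalized P\'olya urns, invokes exchangeability and de Finetti's theorem to pin down the (projected) mixing measure, and finally removes the binning by the martingale convergence theorem. The delicate limits of the partition-dependent constants in \eqref{weights_FV} are precisely what produce statements B and C of Proposition \ref{FV smoothing weights} — the divergence and the pruning to the set $\mathcal{D}$ of shared values — phenomena a direct Bayes-on-$x$ argument cannot detect. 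So either restrict your argument to a discrete $P_{0}$ supported by the observed values, or replace the direct conditioning by the partition/limiting argument; the latter is the paper's route.
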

\begin{remark}
In the previous result, we have used notation $\kk_{i-1}$ and $\kk_{i+1}$ for the integrating variables, whose indices should help the intuition by indicating the time point they refer to. Note however that in principle these quantities are determined at time $t_i$, being the number of lineages in a time-reversed genealogy. Instead of using generic integrating variables $\ii,\jj$, we choose to adopt this notational convention here and later for the sake of readability.
\end{remark}
\begin{proof}
Without loss of generality, let $i = 1$ and denote $X_{1}=X_{t_{1}}$. Given a measurable partition $\A = (A_1, \dots , A_m)$ of  $\mathcal{Y}$, let $X_1(\A) := (X_1(A_1), \dots , X_1(A_m))$ and denote by $\YY(\A)$ the list of labels derived from binning $\YY$ into $\A$, i.e., whose $i$-th element is $j$ if $Y_{i}\in A_{j}$. 
Further, let $\{\B_n,n\ge1\} = \{(B^n_1, \dots, B_n^n),n\ge1\}$ be a sequence of increasingly finer partitions of $\Y$ such that $\B_n$ is finer than $\A$, for every $n$, and such that $\max_{j} \text{diam}(B^n_j) \to 0$ as $n$ diverges. Since $\B_n$ is increasingly finer, we have that $\left(\mathbb{E}\left[ f(X_1(\mathcal{A})) | \YY_0(\mathcal{B}_n),\YY_1(\mathcal{B}_n), \YY_2(\mathcal{B}_n) \right] \right)_n$ is a martingale for every bounded and continuous function $f$ (see Proposition $V.2.7$ in \cite{Cinlar}). Thus, by the martingale convergence theorem we have that $X_1(\mathcal{A}) | \YY_0(\mathcal{B}_n),\YY_1(\mathcal{B}_n), \YY_2(\mathcal{B}_n)$ converges weakly to $X_1(\mathcal{A}) | \YY_0, \YY_1, \YY_2$ as $n \to \infty$. The left hand side of the previous expression can be characterized, {by virtue of de Finetti's Theorem, 
in terms of the 
predictive distributions} of $Y_1^{1:k}(\A) | \YY_0(\B_n),\YY_1(\B_n), \YY_2(\B_n)$ for arbitrary $k$, where $Y_1^{1:k}(\A)$ denotes $k$ samples from $X_1(\A)$. Without loss of generality, let now $n$ be large enough that different observations lie in different sets of $\B_n$, and write, for brevity, $X_{1,n} := X_1(\B_n)$ and $\YY_{i,n} := \YY_i(\B_n)$. Let also $p(y^{1:k} | \YY_{0,n},\YY_{1,n}, \YY_{2,n}  )$ be the density of the vector $Y^{1:k}_1(\A)$ evaluated at $y^{1:k}$, conditional on the binned observation $\YY_{0,n},\YY_{1,n}, \YY_{2,n}$. 
Then we have
\begin{equation}\label{start}
\begin{aligned}
p  ( y^{1:k} &| \YY_{0,n},\YY_{1,n}, \YY_{2,n}  ) \propto p  ( y^{1:k}, \YY_{0,n},\YY_{1,n}, \YY_{2,n}  )= \E \left[ p  ( y^{1:k} ,\YY_{0,n},\YY_{1,n}, \YY_{2,n} | X_{1,n}  ) \right]\\
& = \E \left[ p  ( y^{1:k} | X_{1,n}  ) p  ( \YY_{0,n} | X_{1,n}  ) p  ( \YY_{1,n} | X_{1,n}  ) p  (\YY_{2,n} | X_{1,n}  ) \right] 
\end{aligned}
\end{equation}
where in the last identity we have used the conditional independence of the observations given the signal state (cf.~\eqref{FV HMM}). By Lemma \ref{lemma: predictive likelihood} and the subsequent comment we get
\[
\begin{aligned}
&p  ( \YY_{0,n} | X_{1,n}  ) \propto \sum_{\oo\le\kk_{i-1}\le \nn_{i-1}}p_{\nn_{i-1},\kk_{i-1}}h (X_{1,n},\kk_{i-1} ) \\
& p  ( \YY_{2,n} | X_{1,n}  ) \propto \sum_{\oo \le \kk_{i+1} \le \nn_{i+1}}p'_{\nn_{i+1},\kk_{i+1}}h (X_{1,n},\kk_{i+1} )\\
& p  ( \YY_{1,n} | X_{1,n}  ) \propto h ( X_{1,n}, \nn_{i}  )
\end{aligned}
\]
where
$p_{\nn_{i-1},\kk_{i-1}} := p_{\nn_{i-1},\kk_{i-1}}(\Delta_{i})$ and $p'_{\nn_{i+1},\kk_{i+1}} := p_{\nn_{i+1},\kk_{i+1}}(\Delta_{i+1})$. By linearity and by \eqref{product_h}, \eqref{start} is thus proportional to
\begin{equation}\nonumber
\begin{aligned}
\sum_{\oo\le\kk_{i-1}\le \nn_{i-1}}
&\,\sum_{\oo \le \kk_{i+1} \le \nn_{i+1}}p_{\nn,\kk_{i-1}}p'_{\mm,\kk_{i+1}}
\frac{m^{(n)} ( \kk_{i-1} + \nn_{i}+ \kk_{i+1}  )}{m^{(n)} (\kk_{i-1}  )m^{(n)} (\nn_{i} )m^{(n)} (\kk_{i+1} )}\\
&\,\times \E \left[ p  ( y^{1:k} | X_{1,n}  ) h  ( X_{1,n}, \kk_{i-1}+\nn_{i}+\kk_{i+1}  ) \right],
\end{aligned}
\end{equation} 
where $m^{(n)}$ denotes the marginal distribution in \eqref{dirichlet categorical} relative to the model induced by the partition $\B_{n}$.
Moreover, using \eqref{h function} and \eqref{update operator} it can be seen that 
\begin{equation}\nonumber
\E\Big[p(\nn'| \xx)h(\xx,\nn)\Big]
=\int p(\nn'| \xx)\frac{p(\nn|\xx)}{m(\nn)}p(\xx)\d \xx
=\frac{m(\nn,\nn')}{m(\nn)}=m_{\nn}(\nn'),
\end{equation} 
with $m_{\nn}(\nn'):=p(\nn'|\nn)$,
hence we can write
\[
\E \left[ p  ( y^{1:k} | X_{1,n}  ) h  ( X_{1,n}, \kk_{i-1}+\nn_{i}+\kk_{i+1}  ) \right] = m_{\kk_{i-1}+\nn_{i}+\kk_{i+1}} ( y^{1:k}  ).
\]
Note that the above identity holds since $\B_n$ is finer than $\A$. Hence the left hand side of \eqref{start} equals
\begin{equation}\nonumber
\begin{aligned}
\sum_{\oo\le\kk_{i-1}\le \nn_{i-1}}
&\,\sum_{\oo \le \kk_{i+1} \le \nn_{i+1}}p_{\nn_{i-1},\kk_{i-1}}p'_{\nn_{i+1},\kk_{i+1}}\\
&\,\times\frac{m^{(n)} ( \kk_{i-1} + \nn_{i}+ \kk_{i+1}  )}{C_{n}m^{(n)} (\kk_{i-1}  )m^{(n)} (\nn_{i} )m^{(n)} (\kk_{i+1} )}
m_{\kk_{i-1}+\nn_{i}+\kk_{i+1}} ( \nn' )
\end{aligned}
\end{equation} 
where $C_n$ is a normalizing constant and $\nn'$ is the vector of multiplicities associated to $y^{1:k}$. Since $m_{\kk_{i-1}+\nn_{i}+\kk_{i+1}} ( \nn' )$ is the distribution induced by the P\'olya Urn scheme of the Dirichlet--multinomial model, it follows that the law of $Y^{1:k}_1(\A) | \YY_{0,n},\YY_{1,n}, \YY_{2,n}$ is exchangeable. Note in particular that this marginal distribution does not depend on the partition $\B_{n}$, since $\A$ is a coarser partition.  Given the arbitrariness of $k$, we can appeal to de Finetti's Theorem to conclude that the law of $X_1(\A) | \YY_{0,n}, \YY_{1,n}, \YY_{2,n}$ is given by
\begin{equation}\label{WF smoothing}
\begin{aligned}
\sum_{\oo\le\kk_{i-1}\le \nn_{i-1}}
&\,\sum_{\oo \le \kk_{i+1} \le \nn_{i+1}}
p_{\nn_{i-1},\kk_{i-1}}p'_{\nn_{i+1},\kk_{i+1}}\\
&\,\times\frac{m^{(n)} ( \kk_{i-1} + \nn_{i}+ \kk_{i+1}  )}{C_{n}m^{(n)} (\kk_{i-1}  )m^{(n)} (\nn_{i} )m^{(n)} (\kk_{i+1} )}
 \pi_{\alpha(\A)+ \kk_{i-1}(\A)+\nn_{i}(\A)+ \kk_{i+1}(\A)}
\end{aligned}
\end{equation} 
where $\alpha(\A)=(\alpha(A_{1}),\ldots,\alpha(A_{m}))$ and $\kk_{i-1}(\A),\nn_{i}(\A),\kk_{i+1}(\A)$ denote the multiplicities projected onto $\A$. The limit as $n\rightarrow \infty$ can now be computed by virtue of the martingale convergence theorem. The proof is completed by observing that the limiting weights do not depend on the partition $\B_{n}$ and the previous display coincides with the projection onto the partition $\A$ of a finite mixture of laws of Dirichlet processes. 
\end{proof}

The previous result provides an explicit representation of the conditional law of a FV state given observations at adjacent times, but does not investigate in full detail the mixture weights, denoted generically in the Theorem statement, which we do next. To pursue this task, we need to compute
\begin{equation}\label{weights_FV}
\lim_{n \to \infty}
\frac{m^{(n)} ( \kk_{i-1} + \nn_{i}+ \kk_{i+1}  )}
{C_{n}m^{(n)} (\kk_{i-1}  )m^{(n)} (\nn_{i} )m^{(n)} (\kk_{i+1} )}.
\end{equation}
since the term $p_{\nn_{i-1},\kk_{i-1}}p'_{\nn_{i+1},\kk_{i+1}}$ in \eqref{WF smoothing} does not depend on $n$. 
In the setting of Theorem \ref{FV one-step-smoothing}, denote now by
\begin{equation}\nonumber
\begin{aligned}
D_{i-1} =&\, \left\{j \in \{1, \dots , K\}\, : \, n_{i-1,j} > 0 \text{ and either }  n_{i,j} > 0 \text{ or } n_{i+1,j} > 0\right\},\\
D_{i+1} =&\, \left\{j \in \{1, \dots , K\}\, : \, n_{i+1,j} > 0 \text{ and either }  n_{i,j} > 0 \text{ or } n_{i-1,j} >0 \right\},
\end{aligned}
\end{equation} 
the set of distinct values in $\YY_{i-1}$ shared with $\YY_{i}$ or $\YY_{i+1}$,  and those in $\YY_{i+1}$ shared with $\YY_{i-1}$ or $\YY_{i}$, respectively. Then
\[
\mathcal{D} = \left\{(\kk, \kk') \leq (\nn_{i-1},\nn_{i+1}):\, k_{j} > 0, \, k_{j'}' > 0,\forall j \in D_{i-1} \text{ and } j' \in D_{i+1}\right\}
\]
is the set of multiplicities $(\kk, \kk')$ not greater than $(\nn_{i-1},\nn_{i+1})$ such that the frequency of distinct values shared between different collection times is strictly positive. 
For example, if $t_{i}$ is the current time index, suppose we have {$\nn_{i-1} = (1,3,0)$, $\nn_i = (0,0,1)$ and $\nn_{i+1} = (0,2,1)$},
whereby of the three types observed overall, at time $t_{i-1}$ we observed multiplicities 1 and 3 for the first two, at time $t_{i}$ an instance of a third type, and so on. Then
\begin{equation}\nonumber
\begin{aligned}
\mathcal{D} 
=&\, \left\{(\ii, \jj):\, \ii\le (1,3,0), \, \jj\le(0,2,1), \, i_2 > 0, j_2 > 0, j_3 > 0 \right\}
\end{aligned}
\end{equation} 
is given by vectors of multiplicities not greater than $(\nn_{i-1},\nn_{i+1})$, with positive entries for type two, which is shared by times $t_{i-1},t_{i+1}$, and for type three, limited to the second coordinate, since it is shared by time $t_{i+1}$ and the current time. In other words, multiplicities not greater than those observed, with positive entries for types: (i) observed at both times different from the current, or (ii) observed at the current time and at least another time. Notice that $\mathcal{D} = \emptyset$ corresponds to the case in which no values are shared between the three collection times $t_{i-1}$, $t_i$ and $t_{i+1}$, which holds, for example, when all the observations are distinct.

Before stating the result, note that when $P_{0}$ is supported by a countably infinite set, $\mm(\nn)$ can be defined by extension of \eqref{dirichlet categorical}, where all but a finite number of terms simplify in the ratio. 
Let also $a^{(b)} = a(a+1)\dots(a+b-1)$ denote the Pochhammer symbol.

\begin{proposition}\label{FV smoothing weights}
In the setting of Theorem \ref{FV one-step-smoothing}, let $\tilde p=p_{\nn_{i-1},\kk_{i-1}}(\Delta_{i})p_{\nn_{i+1},\kk_{i+1}}(\Delta_{i+1})$. Then
\begin{itemize}
\item[A.] if $P_0$ is discrete,
\begin{equation}\label{FV weight P_0 discrete}
w^{\nn_{i-1},\nn_{i+1}}_{\kk_{i-1}, \nn_{i}, \kk_{i+1}}(\Delta_{i}, \Delta_{i+1}) \propto 
\tilde p\,
\frac{m(\kk_{i-1} +\nn_{i}+ \kk_{i+1})}{m( \kk_{i-1} )m( \nn_{i} )m(\kk_{i+1})};
\end{equation} 
\item[B.] if $P_0$ is nonatomic and $\mathcal{D} = \emptyset$, 
\[
w^{\nn_{i-1},\nn_{i+1}}_{\kk_{i-1}, \nn_{i}, \kk_{i+1}}(\Delta_{i}, \Delta_{i+1}) \propto 
\tilde p\,
\frac{\theta^{(|\kk_{i-1}|)}\theta^{(|\kk_{i+1}|)}}{(\theta+|\nn_{i}|)^{(|\kk_{i-1}|+|\kk_{i+1}|)}};
\]
\item[C.] if $P_0$ is nonatomic and $\mathcal{D} \neq \emptyset$, 
\begin{equation}\nonumber
\begin{aligned}
w^{\nn_{i-1},\nn_{i+1}}_{\kk_{i-1}, \nn_{i}, \kk_{i+1}}(\Delta_{i}, \Delta_{i+1}) \propto&\,
\tilde p\,
\frac{\theta^{(|\kk_{i-1}|)}\theta^{(|\kk_{i+1}|)}}{(\theta+|\nn_{i}|)^{(|\kk_{i-1}|+|\kk_{i+1}|)}}\prod_{j =1}^K{\frac{(k_{i-1,j}+n_{i,j}+k_{i+1,j}-1)!}{(k_{i-1,j}-1)!\,(n_{i,j}-1)!\,(k_{i+1,j}-1)!} }
\end{aligned}
\end{equation} 
if $(\kk_{i-1}, \kk_{i+1}) \in \mathcal{D}$, and zero otherwise.
\end{itemize}
\end{proposition}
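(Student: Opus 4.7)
The plan is to evaluate, case by case, the limit $n\to\infty$ of the pre-normalisation factor in \eqref{weights_FV}. Using \eqref{dirichlet categorical} one obtains $m^{(n)}(\nn) = (\theta^{(|\nn|)})^{-1}\prod_j (\alpha_j^{(n)})^{(n_j)}$ with $\alpha_j^{(n)} := \alpha(B_j^n)$, whence the central ratio factorises as
\[
\frac{m^{(n)}(\kk_{i-1}+\nn_i+\kk_{i+1})}{m^{(n)}(\kk_{i-1})\,m^{(n)}(\nn_i)\,m^{(n)}(\kk_{i+1})} = R_\theta \prod_{j=1}^{K} \frac{(\alpha_j^{(n)})^{(M_j)}}{(\alpha_j^{(n)})^{(k_{i-1,j})}(\alpha_j^{(n)})^{(n_{i,j})}(\alpha_j^{(n)})^{(k_{i+1,j})}},
\]
with $M_j := k_{i-1,j}+n_{i,j}+k_{i+1,j}$ and $R_\theta := \theta^{(|\kk_{i-1}|)}\theta^{(|\nn_i|)}\theta^{(|\kk_{i+1}|)}/\theta^{(|\kk_{i-1}|+|\nn_i|+|\kk_{i+1}|)}$; cells carrying no observations contribute trivially since $(\alpha)^{(0)}=1$. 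The Pochhammer identity $\theta^{(A+B+C)} = \theta^{(B)}(\theta+B)^{(A+C)}$ with $B=|\nn_i|$ simplifies $R_\theta$ to $\theta^{(|\kk_{i-1}|)}\theta^{(|\kk_{i+1}|)}/(\theta+|\nn_i|)^{(|\kk_{i-1}|+|\kk_{i+1}|)}$, already matching the $\theta$-part of Cases B and C. The case split thus hinges on the asymptotic behaviour of the per-type ratios.

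For Case A, for $n$ large each observed atom $y_j$ is isolated in its own cell so $\alpha_j^{(n)}\to\alpha(\{y_j\})$ and the per-type ratios stabilise; via the extended $m$ from the remark preceding the Proposition, $R_\theta\prod_j(\cdots)$ equals $m(\kk_{i-1}+\nn_i+\kk_{i+1})/[m(\kk_{i-1})m(\nn_i)m(\kk_{i+1})]$, yielding the claim. For Case B, $\alpha_j^{(n)}\to 0$, but $\mathcal{D}=\emptyset$ forces each $j$ to have at most one positive entry among $k_{i-1,j},n_{i,j},k_{i+1,j}$, so every per-type ratio equals $1$ identically. The limit is then $R_\theta$, already in the form claimed by Case B (up to the factor $\tilde p$).

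For Case C, the asymptotic $(\alpha)^{(k)} = (k-1)!\,\alpha + O(\alpha^2)$ for $k\ge 1$ gives a per-type ratio $\sim C_j\,(\alpha_j^{(n)})^{1-r_j}$, where $r_j$ is the number of positive entries among $k_{i-1,j},n_{i,j},k_{i+1,j}$ and $C_j = (M_j-1)!$ divided by the product of $(k-1)!$ over those positive entries $k$ (and the whole ratio equals $1$ when $M_j=0$). The overall rate $\prod_j(\alpha_j^{(n)})^{1-r_j}$ depends only on which coordinates are positive, and a direct combinatorial check against the definitions of $D_{i-1},D_{i+1}$ shows that it is maximised over $(\kk_{i-1},\kk_{i+1})$ precisely when $(\kk_{i-1},\kk_{i+1})\in\mathcal{D}$: ``missing'' any shared coordinate (e.g.\ taking $k_{i-1,j}=0$ for some $j\in D_{i-1}$) strictly lowers some $r_j$. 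Hence after normalising by $C_n$, non-$\mathcal{D}$ terms vanish while within $\mathcal{D}$ the common divergence rate cancels, leaving $\prod_j C_j$ times the $\theta$-ratio of Case B. The main obstacle is the combinatorial identification of $\mathcal{D}$ as the maximum-rate set; the remaining algebra is direct Pochhammer manipulation, and $(k-1)!$ factors corresponding to $k=0$ in the displayed Case C formula are absent from the product by construction.
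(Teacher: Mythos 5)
Your proposal is correct and follows essentially the same route as the paper: both compute the limit of \eqref{weights_FV} via the product-of-Pochhammer (P\'olya urn) form of $m^{(n)}$ over the partition cells, pull out the $\theta$-ratio, and then treat the per-cell factors (limiting atom masses in case A, identically $1$ when nothing is shared in case B, divergence at a common data-determined rate for nodes in $\mathcal{D}$ in case C, which cancels under normalisation and kills the non-$\mathcal{D}$ nodes). Your case C merely repackages the paper's explicit factor manipulation through the asymptotics $(\alpha)^{(k)}\sim (k-1)!\,\alpha$ and the exponents $1-r_j$, which is the same argument stated slightly more transparently.
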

\begin{proof}
Statement $A$ follows from the fact that ultimately the limit partition sets with positive multiplicities will be those coinciding with the support points of $P_{0}$. 

Assume now, without loss of generality, that the partition $\B_n$ is such that the first observation lies in $B_1^n$, the second in $B_2^n$ and so on. The density of a vector $\kk$ of multiplicities is in this case determined by the Blackwell--MacQueen P\'olya urn scheme \citep{BM73} to be
\[
m^{(n)}(\kk) = \frac{\prod_{j = 1}^K\prod_{h = 0}^{k_j-1}\left(\theta P_0(B_j^n)+h\right)}{\theta ^{(|\kk|)}},
\]
with the convention that $\prod_{h = 0}^{-1} = 1$. Denoting $m_{\nn}(\nn'):=p(\nn'|\nn)$, as in the proof of Theorem \ref{FV one-step-smoothing}, it follows that
\[
\begin{aligned}
&\,\frac{m^{(n)}(\kk_{i-1} +\nn_{i}+ \kk_{i+1})}{m^{(n)}( \kk_{i-1} )m^{(n)}( \nn_{i} )m^{(n)}(\kk_{i+1})} 
= \frac{m^{(n)}_{\nn_i}(\kk_{i-1} + \kk_{i+1})}{m^{(n)}( \kk_{i-1} )m^{(n)}(\kk_{i+1})} \\
&\,= \frac{\theta^{(|\kk_{i-1}|)}\theta^{(|\kk_{i+1}|)}}{(\theta+|\nn_{i}|)^{(|\kk_{i-1}|+|\kk_{i+1}|)}}
\prod_{j = 1}^K\frac{\prod_{h = 0}^{k_{i-1,j}+k_{i+1,j}-1}\left(\theta P_0(B_j^n)+n_{i,j}+h\right)}{\prod_{h = 0}^{k_{i-1,j}-1}\left(\theta P_0(B_j^n)+h\right)\prod_{h = 0}^{k_{i+1,j}-1}\left(\theta P_0(B_j^n)+h\right)}.
\end{aligned}
\]
If $\mathcal{D} = \emptyset$, since no values are shared across times, we have that, for every $j$, at most one between $k_{i-1,j}$ and $k_{i+1,j}$ is non zero, and in such case we have $n_{i,j} = 0$.  Then, if $k_{i-1, j} > 0$ we have $k_{i+1, j} = 0$ and $n_{i,j} = 0$, so
\[
\begin{aligned}
&\frac{\prod_{h = 0}^{k_{i-1,j}+k_{i+1,j}-1}\left(\theta P_0(B_j^n)+n_{i,j}+h\right)}{\prod_{h = 0}^{k_{i-1,j}-1}\left(\theta P_0(B_j^n)+h\right)\prod_{h = 0}^{k_{i+1,j}-1}\left(\theta P_0(B_j^n)+h\right)} 
 = 1,
\end{aligned}
\]
and the same happens when $k_{i+1,j}>0$,  $k_{i-1, j} = 0$ and $n_{i,j} = 0$. This leads to statement $B$. 

If $\mathcal{D} \ne \emptyset$, since some values are shared across times,  there exists a $j$ such that one of the following is true: (i) $n_{i,j} > 0$ and $k_{i-1,j} > 0$; (ii)  $n_{i,j} > 0$ and $k_{i+1,j} > 0$; (iii) $k_{i-1,j} > 0$ and $k_{i+1,j} > 0$. In case (i)
\[
\frac{\prod_{h = 0}^{k_{i-1,j}+k_{i+1,j}-1}\left(\theta P_0(B_j^n)+n_{i,j}+h\right)}{\prod_{h = 0}^{k_{i-1,j}-1}\left(\theta P_0(B_j^n)+h\right)\prod_{h = 0}^{k_{i+1,j}-1}\left(\theta P_0(B_j^n)+h\right)} \to \infty
\]
as $n\rightarrow \infty$, since $P_0(B_j^n) \to 0$ and the denominator vanishes. Case (ii) is obtained similarly. In case (iii), rewrite the weights as
\begin{equation}\label{rewrite_weights}
\begin{aligned}
&\frac{\prod_{h = 0}^{k_{i-1,j}+k_{i+1,j}-1}\left(\theta P_0(B_j^n)+n_{i,j}+h\right)}{\prod_{h = 0}^{k_{i-1,j}-1}\left(\theta P_0(B_j^n)+h\right)\prod_{h = 0}^{k_{i+1,j}-1}\left(\theta P_0(B_j^n)+h\right)} \\
&= \frac{\prod_{h = 0}^{k_{i-1,j}-1}\left(\theta P_0(B_j^n)+n_{i,j}+h\right)}{\prod_{h = 0}^{k_{i-1,j}-1}\left(\theta P_0(B_j^n)+h\right)}\frac{\prod_{h = k_{i-1,j}}^{k_{i-1,j}+k_{i+1,j}-1}\left(\theta P_0(B_j^n)+n_{i,j}+h\right)}{\prod_{h = 0}^{k_{i+1,j}-1}\left(\theta P_0(B_j^n)+h\right)}.
\end{aligned}
\end{equation}
Here the left factor is such that
\[
\frac{\prod_{h = 0}^{k_{i-1,j}-1}\left(\theta P_0(B_j^n)+n_{i,j}+h\right)}{\prod_{h = 0}^{k_{i-1,j}-1}\left(\theta P_0(B_j^n)+h\right)} \geq 1
\]
and the right factor can be written
\[
\frac{\prod_{h = k_{i-1,j}}^{k_{i-1,j}+k_{i+1,j}-1}\left(\theta P_0(B_j^n)+n_{i,j}+h\right)}{\prod_{h = 0}^{k_{i+1,j}-1}\left(\theta P_0(B_j^n)+h\right)} = \frac{\prod_{h =0}^{k_{i+1,j}-1}\left(\theta P_0(B_j^n)+k_{i-1,j}+n_{i,j}+h\right)}{\prod_{h = 0}^{k_{i+1,j}-1}\left(\theta P_0(B_j^n)+h\right)}.
\]
Therefore, the left hand side of  \eqref{rewrite_weights} is {greater than or equal to}
\[
\frac{\prod_{h = 0}^{k_{i+1,j}-1}\left(\theta P_0(B_j^n)+k_{i-1,j}+n_{i,j}+h\right)}{\prod_{h = 0}^{k_{i+1,j}-1}\left(\theta P_0(B_j^n)+h\right)}
\] 
which diverges to infinity as $n\rightarrow \infty$ as well. 
Thus, nodes with shared observations have divergent unnormalized weights. Let $\mathcal{S} = D_{i-1} \cup D_{i+1}$ be the set of shared values and let $(\kk_{i-1}, \kk_{i+1}) \in \mathcal{D}$. Then, we can write the associated weight as
\[
\begin{aligned}
& \frac{\theta^{(|\kk_{i-1}|)}\theta^{(|\kk_{i+1}|)}}{(\theta+|\nn_{i}|)^{(|\kk_{i-1}|+|\kk_{i+1}|)}}\\
&\quad \times
\prod_{j = 1}^K
\frac{\prod_{h = 0}^{k_{i-1,j}+n_{i,j}+k_{i+1,j}-1}\left(\theta P_0(B_j^n)+h\right)}{\prod_{h = 0}^{k_{i-1,j}-1}\left(\theta P_0(B_j^n)+h\right)\prod_{h = 0}^{n_{i,j}-1}\left(\theta P_0(B_j^n)+h\right)\prod_{h = 0}^{k_{i+1,j}-1}\left(\theta P_0(B_j^n)+h\right)}\\
&
= \frac{\theta^{(|\kk_{i-1}|)}\theta^{(|\kk_{i+1}|)}}{(\theta+|\nn_{i}|)^{(|\kk_{i-1}|+|\kk_{i+1}|)}}\\
&\quad \times
\prod_{j \in \mathcal{S}}
\frac{\prod_{h = 1}^{k_{i-1,j}+n_{i,j}+k_{i+1,j}-1}\left(\theta P_0(B_j^n)+h\right)}{\prod_{h = 1}^{k_{i-1,j}-1}\left(\theta P_0(B_j^n)+h\right)\prod_{h = 1}^{n_{i,j}-1}\left(\theta P_0(B_j^n)+h\right)\prod_{h = 1}^{k_{i+1,j}-1}\left(\theta P_0(B_j^n)+h\right)}\\
&\quad \times \frac{1}{\prod_{j \in \mathcal{S}}P_0(B_j^n)\prod_{h = 0}^{n_{i,j}-1}P_0(B_j^n)}.
\end{aligned}
\]
Here the third factor on the right hand side is common to each node and is cancelled upon normalizing, while the second factor converges to the product in statement $C$, proving the result.	
\end{proof}

We now have a full description of \eqref{one step FV}, which is a finite mixture of laws of Dirichlet random measures whose parameter measure $\alpha + \sum_{j = 1}^{K}(k_{i-1,j}+n_{i,j}+k_{i+1,j})\delta_{y_{j}}$ contains, besides the unnormalised offspring measure $\alpha$, the current observations $\YY_i$ and a subset of the observations $(\YY_{i-1}, \YY_{i+1})$ collected at adjacent times. The mixture weights are in turn determined by the following two elements. The first is given by the transition probabilities of the death process associated to the typed coalescent, which determines the probability that past and future data are atoms in the respective random measures as a function of the distance between time $t_{i}$ and the adjacent times. As the lags $\Delta_{i}$ and $\Delta_{i+1}$ grow, the number of survived lineages is lower with higher probability and the random measures in the mixture will carry, on average, less information in terms of types observed at different times. 
The second element is the joint marginal likelihood of past, present and future data. For instance, when the offspring distribution is discrete, the ratio in \eqref{FV weight P_0 discrete} is higher when $m(\kk_{i-1} +\nn_{i}+ \kk_{i+1}) > m(\kk_{i-1}) m(\nn_{i})m(\kk_{i+1})$, i.e. when sampling jointly $\kk_{i-1}$, $\nn_{i}$ and $\kk_{i+1}$ has high probability relative to sampling them separately: this provides a smoothing effect by favouring multiplicities $\left( \kk_{i-1}, \nn_{i},\kk_{i+1}\right)$ with the same types collected at different times. Such mechanism comes to an extreme when the offspring distribution is nonatomic. In this case, the weights of the mixture components that do not carry atoms observed at multiple times, in the sense of the set $\mathcal{D}$, vanish in the limit. 

The above results do not include the case of $\alpha$ having both a continuous and discrete component. The same tools used for proving Proposition \ref{FV smoothing weights} can in principle be used to deal with this case as well, where we expect parts A, B and C of the statement to hold for the respective parts of the parameters measure. In essence, values drawn by the discrete part of $\alpha$ are subjected to the probability that lineages survive as controlled by the term $\tilde p$ (hence ultimately by the death process), whereas values drawn from the continuous part of $\alpha$ are also, in addition, subjected to whether they are shared across collection times. A full description of such results would require a cumbersome notation and would not add further valuable insight, hence, we will not pursue this task here. 

We can now revisit the first statement of Theorem \ref{summary theorem} in light of the above findings. 
Let $\YY_{t_{0:N}}$ be the entire dataset sampled in model \eqref{FV HMM}, and let $K$ be the number of distinct values in $\YY_{t_{0:N}}$. Denoting by $\nminus_{i-1}$ the total multiplicities of the vector $\YY_{t_{0:i-1}}$, we know there exist weights $\{v_{1,\kk_{i-1}} \}$ such that
\begin{equation}\label{xi note}
\begin{aligned}
\L  ( X_{t_{i}}  |  \YY_{t_{0:i-1}}) = \sum_{\kk_{i-1} \leq \nminus_{i-1}}v_{1,\kk_{i-1}}\Pi_{\alpha+\sum_{j = 1}^K k_{i-1,j}\delta_{y_j}}.
\end{aligned}
\end{equation} 
This can be obtained recursively starting from $X_{t_{0}}\sim \Pi_{\alpha}$, where the reversible measure $\Pi_{\alpha}$ acts as prior (or unconditional distribution) for the marginal state of the signal. Upon observing $\YY_{t_{0}}$, the update yields {$\L  ( X_{t_{0}}  |  \YY_{t_{0}}  ) = \Pi_{\alpha+\sum_{j = 1}^{K_{0} }n_{0,j}\delta_{y_j}}$,}
where {$\nn_{0}$ 
	is the vector of multiplicities} of the $K_{0}$ distinct values in $\YY_{t_{0}}$. Propagating forward the previous through $\F_{t}$, one obtains
\begin{equation}\nonumber
\L  ( X_{t_{1}}  |  \YY_{t_{0}}  ) = \sum_{\kk_{0}\le \nn_{0}}p_{\nn_{0},\kk_{0}}(t_{1}-t_{0})\Pi_{\alpha+\sum_{j = 1}^{K_{0}}k_{0,j}\delta_{y_j}},
\end{equation} 
which can then be updated once data in $t_{1}$ become available by observing that \eqref{update operator} satisfies
\begin{equation}\nonumber
\U_{\nn}\bigg(\sum_{i=1}^{H}w_{i}\xi_{i}\bigg)
=\sum_{i=1}^{H}\frac{w_{i}p_{\xi_{i}}(\nn)}{\sum_{h=1}^{H}w_{h}\xi_{h}(\nn)}\U_{\nn}(\xi_{i}).
\end{equation} 
Proceeding in this way, alternating updates and forward propagations, leads to \eqref{xi note}; see \cite{PRS16}, Section 3.1, for further details.
Denoting now by $\nplus_{i+1}$ the total multiplicities of the vector $\YY_{t_{i+1:N}}$, by virtue of Proposition \ref{prop: nonpar backward} and of the linearity of \eqref{backward operator}, there exist weights $\{v_{2,\kk_{i+1}} \}$ such that
\begin{equation}\label{xi two}
\L  ( X_{t_{i}}  |  \YY_{t_{i+1:N}}  ) = \sum_{\kk_{i+1} \leq\nplus_{i+1}}v_{2,\kk_{i+1}}\Pi_{\alpha+\sum_{j = 1}^K k_{i+1,j}\delta_{y_j}}.
\end{equation} 
This can also be {obtained 
by working} backwards from $X_{t_{N}}\sim \Pi_{\alpha}$, then updating given the multiplicities $\nn_{N}$ of the $K_{N}$ distinct values in $\YY_{t_{N}}$, which yields {$\L  ( X_{t_{N}}  |  \YY_{t_{N}}  ) = \Pi_{\alpha+\sum_{j = 1}^{K_{N} }n_{N,j}\delta_{y_j}}$,}
then, using Proposition \ref{prop: nonpar backward}, propagating backwards to get
\begin{equation}\nonumber
\L  ( X_{t_{N-1}}  |  \YY_{t_{N}}  ) = \sum_{\kk_{N}\le \nn_{N}}p_{\nn_{N},\kk_{N}}(t_{N}-t_{N-1})\Pi_{\alpha+\sum_{j = 1}^{K_{N}}k_{N,j}\delta_{y_j}},
\end{equation} 
and so on. The following proposition connects the two above distributions to yield the general representation in Theorem \ref{summary theorem}.


\begin{proposition}\label{thm: general smoothing FV}
Assume \eqref{xi note} and \eqref{xi two} hold, and let $\nn_{i}$ be the vector of multiplicities of $\YY_{t_{i}}$ relative to the $K$ distinct values in the whole dataset $\YY_{t_{0:N}}$. Then 
\begin{equation}\label{FV general smoothing two sums}
\L( X_{t_{i}} | \YY_{t_{0:N}} ) = 
\sum_{\kk_{i-1}\le \nminus_{i-1}}\sum_{\kk_{i+1}\le \nplus_{i+1}}
p_{\kk_{i-1},\nn_{i},\kk_{i+1}}
\Pi_{\alpha_{\kk_{i-1}+\nn_{i}+\kk_{i+1}}},
\end{equation} 
where $\alpha_{\kk_{i-1}+\nn_{i}+\kk_{i+1}}$ is as in \eqref{alpha conditional} and the weights 
\begin{equation}\label{FV pesi general smoothing two sums}
p_{\kk_{i-1}, \nn_{i},\kk_{i+1}}= 
\sum_{\hh\le \nminus_{i-1}:\,\hh \ge \kk_{i-1}}
\sum_{\ll\le \nplus_{i+1}:\, \ll \geq \kk_{i+1}}
v_{1,\hh}v_{2,\ll}w^{\hh, \ll}_{\kk_{i-1}, \nn_i,\kk_{i+1}},
\quad 
\kk_{i-1}\le \nminus_{i-1}, \kk_{i+1}\le \nplus_{i+1},
\end{equation} 
with $w^{\hh, \ll}_{\kk_{i-1}, \nn_i,\kk_{i+1}}$ as in Proposition \ref{FV smoothing weights}, sum up to one.
\end{proposition}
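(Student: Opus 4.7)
The plan is to reduce this to a direct application of Bayes' Theorem exploiting the hidden Markov structure. Given the signal state $X_{t_i}$, the Markov property of $X$ together with \eqref{FV HMM} imply that $\YY_{t_{0:i-1}}$, $\YY_{t_i}$ and $\YY_{t_{i+1:N}}$ are mutually conditionally independent. Applying Bayes' Theorem separately on the past and future blocks gives $p(\YY_{t_{0:i-1}}\mid X_{t_i})\propto \L(X_{t_i}\mid \YY_{t_{0:i-1}})/\L(X_{t_i})$ and similarly for $\YY_{t_{i+1:N}}$, both up to constants in $X_{t_i}$, so that
\[
\L(X_{t_i}\mid \YY_{t_{0:N}}) \,\propto\, \frac{\L(X_{t_i}\mid \YY_{t_{0:i-1}})\,\L(X_{t_i}\mid \YY_{t_{i+1:N}})\,p(\YY_{t_i}\mid X_{t_i})}{\L(X_{t_i})},
\]
which is precisely the structure encoded by \eqref{smoothing_op} with $\Delta_i=\Delta_{i+1}=0$.

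Next I would substitute the mixture representations \eqref{xi note} and \eqref{xi two} into this expression and expand by linearity, producing a double sum over $(\hh,\ll)$ with $\hh\le \nminus_{i-1}$ and $\ll\le \nplus_{i+1}$. As in the proof of Theorem \ref{FV one-step-smoothing}, densities of Dirichlet random measures must be interpreted through projections onto arbitrary measurable partitions $\A$ refining the $K$ distinct observed types, and the partition-level identity is lifted to the measure-valued level via de Finetti's representation and projective limits. At the partition level, for each fixed pair $(\hh,\ll)$, the integrand reduces to
\[
\frac{\pi_{\alpha+\hh}(\xx)\,\pi_{\alpha+\ll}(\xx)}{\pi_\alpha(\xx)}\,p(\nn_i\mid \xx) \,\propto\, \pi_\alpha(\xx)\,\xx^{\hh+\nn_i+\ll} \,\propto\, \pi_{\alpha+\hh+\nn_i+\ll}(\xx),
\]
whose proportionality constant is a ratio of Beta functions equivalent, via \eqref{dirichlet categorical}, to $m(\hh+\nn_i+\ll)/[m(\hh)\,m(\nn_i)\,m(\ll)]$ up to multinomial coefficients.

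Finally, I would identify the resulting weights with \eqref{FV pesi general smoothing two sums}. Each pair $(\hh,\ll)$ contributes a single Dirichlet component $\Pi_{\alpha+\hh+\nn_i+\ll}$ with coefficient proportional to $v_{1,\hh}v_{2,\ll}$ times the above ratio of marginal likelihoods. Under the identification $\kk_{i-1}=\hh$, $\kk_{i+1}=\ll$, this matches the outer indexing of \eqref{FV general smoothing two sums}, and the inner sums in \eqref{FV pesi general smoothing two sums} concentrate on the diagonal $\hh=\kk_{i-1}$, $\ll=\kk_{i+1}$, consistent with Proposition \ref{FV smoothing weights} evaluated in the zero-lag regime, where the death-process transitions $p_{\hh,\kk_{i-1}}(\cdot)$ and $p_{\ll,\kk_{i+1}}(\cdot)$ degenerate to Kronecker deltas; this also explains the constraints $\hh\ge\kk_{i-1}$, $\ll\ge\kk_{i+1}$. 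That the resulting weights sum to one is immediate since both sides of \eqref{FV general smoothing two sums} are probability measures. The main technical subtlety is rigorously justifying the ratio $\L(X_{t_i}\mid\YY_{t_{0:i-1}})\L(X_{t_i}\mid\YY_{t_{i+1:N}})/\L(X_{t_i})$ at the measure-valued level, for which the partition-projection and de Finetti scheme already developed in Theorem \ref{FV one-step-smoothing} provides the appropriate framework.
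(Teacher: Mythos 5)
Your overall strategy is the paper's: the proof is meant to run exactly as in Theorem \ref{FV one-step-smoothing}, with the single-time blocks $\YY_{i-1},\YY_{i+1}$ replaced by the full past and future blocks, exploiting the conditional independence of past, present and future data given $X_{t_i}$, the product rule \eqref{product_h}, and the partition-projection/de Finetti/martingale-convergence machinery to lift the computation to the measure-valued level. Your factorization display and the partition-level reduction to $\pi_{\aa+\hh+\nn_i+\ll}$ are correct and are indeed the heart of the argument.

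The gap is in the final identification of the weights. You substitute \eqref{xi note}--\eqref{xi two}, i.e.\ the laws of $X_{t_i}$ given past (resp.\ future) data, which already incorporate the forward (resp.\ backward) propagation over $\Delta_i$ (resp.\ $\Delta_{i+1}$); this produces, for each pair $(\hh,\ll)$, a single component $\Pi_{\alpha_{\hh+\nn_i+\ll}}$ with weight proportional to $v_{1,\hh}\,v_{2,\ll}\,m(\hh+\nn_i+\ll)/[m(\hh)m(\nn_i)m(\ll)]$. That is not the stated formula: in \eqref{FV pesi general smoothing two sums} the weights $w^{\hh,\ll}_{\kk_{i-1},\nn_i,\kk_{i+1}}$ are those of Proposition \ref{FV smoothing weights} evaluated at the actual lags, hence contain the death-process factors $p_{\hh,\kk_{i-1}}(\Delta_i)\,p_{\ll,\kk_{i+1}}(\Delta_{i+1})$, and the inner sums over $\hh\ge\kk_{i-1}$, $\ll\ge\kk_{i+1}$ do not collapse; your appeal to a ``zero-lag regime'' in which these transitions degenerate to Kronecker deltas reinterprets the statement rather than proving it. The intended reading (see the sentence following the proposition) is that the indices $\hh,\ll$ label the components of the filtering laws at $t_{i-1}$ and $t_{i+1}$ \emph{before} propagation, the lags being absorbed into $w$ exactly as in Theorem \ref{FV one-step-smoothing}: by Lemma \ref{lemma: predictive likelihood}, Proposition \ref{prop: nonpar backward} and linearity, the block likelihoods are $p(\YY_{t_{0:i-1}}\mid\xx_{t_i})\propto\sum_{\hh}u_{\hh}\sum_{\kk\le\hh}p_{\hh,\kk}(\Delta_i)h(\xx_{t_i},\kk)$ and $p(\YY_{t_{i+1:N}}\mid\xx_{t_i})\propto\sum_{\ll}u'_{\ll}\sum_{\kk\le\ll}p_{\ll,\kk}(\Delta_{i+1})h(\xx_{t_i},\kk)$, with $u_{\hh},u'_{\ll}$ the filtering/backward-filtering weights at $t_{i-1},t_{i+1}$; multiplying by $h(\xx_{t_i},\nn_i)\pi_{\aa}(\xx_{t_i})$ and collecting terms with common $(\kk_{i-1},\kk_{i+1})$ yields precisely the double inner sum of \eqref{FV pesi general smoothing two sums} after readjusting normalizations. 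Equivalently, to pass from your expression to the stated one you would need to expand $v_{1,\kk_{i-1}}=\sum_{\hh\ge\kk_{i-1}}u_{\hh}\,p_{\hh,\kk_{i-1}}(\Delta_i)$ and $v_{2,\kk_{i+1}}=\sum_{\ll\ge\kk_{i+1}}u'_{\ll}\,p_{\ll,\kk_{i+1}}(\Delta_{i+1})$ and track the per-$(\hh,\ll)$ normalizing constants of the $w$'s; without this step your argument establishes a related but different weight expression, so the statement as written is not proved. (To be fair, the displays \eqref{xi note}--\eqref{xi two} invite your reading, but the $\hh\ge\kk_{i-1}$, $\ll\ge\kk_{i+1}$ constraints and the presence of $\tilde p$ inside $w$ only make sense with the propagation carried by the smoothing weights.)
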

\begin{proof}
The first statement follows as in the proof of Theorem \ref{FV one-step-smoothing} by noting that conditioning on $\YY_{i-1},\YY_{i+1}$ in \eqref{one step FV} is qualitatively analogous to conditioning to $\YY_{t_{0:i-1}},\YY_{t_{i+1,N}}$ in \eqref{xi note}-\eqref{xi two}, since the main argument \eqref{start} is based on the factorization of the likelihoods of the data collected prior, concurrently and after the signal state. The second statement follows by the linearity of the expected value in \eqref{start} and by readjusting the weights.
\end{proof}
The proof of Proposition \ref{thm: general smoothing FV} clarifies that the smoothing mixture is computed in two steps: first $\L( X_{t_{i-1}} | \YY_{t_{0:i-1}} )$ and $\L( X_{t_{i+1}} | \YY_{t_{i+1:N}} )$ are computed through backward and forward filtering respectively, then the smoothing operator is applied, as in Theorem \ref{FV one-step-smoothing}. The first step leads to two mixtures whose number of components is $\prod_{k = 1}^K(1+\sum_{j = 0}^{i-1}n_{t_j, k} )$ and $\prod_{k = 1}^K(1+\sum_{j = i+1}^{N}n_{t_j, k} )$
respectively, as shown in Section $4$ of \cite{KKK20}. Recall that here $K$ is the number of distinct values observed in the entire dataset, which is considered as given. 
Since each distinct element of the smoothing distribution is now given by a distinct choice of $\kk_{i-1}$ and $\kk_{i+1}$, the total number of components in the smoothing distribution is therefore 
\begin{equation}\label{number_of_components}
\prod_{k = 1}^K\bigg(1+\sum_{j = 0}^{i-1}n_{t_j, k} \bigg)\bigg(1+\sum_{j = i+1}^{N}n_{t_j, k} \bigg).
\end{equation}
As expected, smoothing comes at a greater nominal computational cost than filtering, since, roughly speaking, it combines information from both past and future. However, the actual cost of smoothing is expected to be much lower than the nominal, due to two factors. The first, specific to the current modelling assumptions, is that in the scenario of Statement C in Proposition 3.5, with a continuous baseline distribution, the number of components is automatically pruned by the smoothing operator, which discards values that are not shared across times. Hence \eqref{number_of_components} represents a crude upper bound. The second factor is that some mixture component weights are typically negligible. This aspect, which had already been noted in \cite{CG06} and was investigated in detail for Wright--Fisher and Cox--Ingersoll--Ross models in \cite{KKK20}, suggests various possible pruning strategies that allow to approximate the smoothing distribution, lowering the actual computational cost by some order of magnitudes while keeping a high precision in the approximation.


%


\subsection{Predictive distributions}\label{subsec: FV pred}

As a corollary to Proposition \ref{thm: general smoothing FV}, we can derive the predictive distribution of further samples collected at time $t_i$, given the original data set $\YY_{t_{0:N}}$. The following statement extends the results in \cite{ALR20}.

\begin{corollary}\label{cor:FV predictive}
In the setting of Proposition \ref{thm: general smoothing FV}, let \eqref{FV general smoothing two sums} be the conditional law of $X_{t_{i}}$ given $\YY_{t_{0:N}}$. Then the law of the $(k+1)$th further sample $Y^{k+1}$ from $X_{t_{i}}$ is
\begin{equation}\label{FV predictive}
\begin{aligned}
\mathbb{P}(&\,Y^{k+1} \in A  |  \YY_{t_{0:N}} , Y^{1:k})=\\
=&\,
\sum_{\kk_{i-1}\le \nminus_{i-1}}\sum_{\kk_{i+1}\le \nplus_{i+1}}
p_{\kk_{i-1},\nn_{i},\kk_{i+1}}
\frac{
\alpha_{\kk_{i-1}+\nn_{i}+\kk_{i+1}}(A)
+ \sum_{j=1}^{k}\delta_{Y^{j}}(A)}{\theta+ |\kk_{i-1}|+|\nn_{i}| + |\kk_{i+1}|+k}
\end{aligned}
\end{equation} 
for every Borel set $A$  of $\Y$, with $\alpha_{\kk_{i-1}+\nn_{i}+\kk_{i+1}}$ is as in \eqref{alpha conditional} and $p_{\kk_{i-1},\nn_{i},\kk_{i+1}}$ as in Proposition \ref{thm: general smoothing FV}.
\end{corollary}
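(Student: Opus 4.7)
My plan is to combine the finite-mixture-of-Dirichlet representation of the smoothing distribution in Proposition \ref{thm: general smoothing FV} with the classical Blackwell--MacQueen P\'olya urn scheme for a single Dirichlet random measure, exploiting the fact that the averaging step over the mixture commutes with taking predictives.

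First I would note that, by \eqref{FV HMM}, conditionally on $X_{t_i}$ the further samples $Y^{1},Y^{2},\ldots$ are iid draws from $X_{t_i}$ and, in particular, conditionally independent of $\YY_{t_{0:N}}$. Hence the sequence $Y^{1:k+1}$ is exchangeable given $\YY_{t_{0:N}}$, and by de Finetti's theorem its joint law is entirely encoded by the conditional law of $X_{t_i}$ given $\YY_{t_{0:N}}$, i.e.~by \eqref{FV general smoothing two sums}.

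Next I would apply Blackwell--MacQueen \citep{BM73} componentwise. For each atom of the smoothing mixture, with parameter measure $\alpha':=\alpha_{\kk_{i-1}+\nn_i+\kk_{i+1}}$ of total mass $|\alpha'|=\theta+|\kk_{i-1}|+|\nn_i|+|\kk_{i+1}|$, the $(k+1)$th predictive under $X_{t_i}\sim\Pi_{\alpha'}$ is
\[
\mathbb{P}(Y^{k+1}\in A\mid Y^{1:k},\,X_{t_i}\sim\Pi_{\alpha'}) \;=\; \frac{\alpha'(A)+\sum_{j=1}^{k}\delta_{Y^{j}}(A)}{|\alpha'|+k},
\]
which matches the ratio in \eqref{FV predictive}. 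Averaging over the mixing measure via linearity of the conditional expectation then yields the displayed double sum.

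The main technical point to check is the treatment of the mixture weights once we also condition on $Y^{1:k}$. I would proceed in the spirit of the final step in the proof of Theorem \ref{FV one-step-smoothing}: write the joint predictive of $Y^{1:k+1}$ given $\YY_{t_{0:N}}$ using \eqref{FV general smoothing two sums}, recognise each summand as the law of $k+1$ exchangeable draws from $\Pi_{\alpha'}$, and divide through by the $k$-sample marginal to obtain the conditional given $Y^{1:k}$. The principal obstacle is to verify, using the Dirichlet--Categorical cancellations in \eqref{dirichlet categorical}--\eqref{c(n,m)}, that the Bayes-update contributions to the weights recombine to yield exactly the $p_{\kk_{i-1},\nn_i,\kk_{i+1}}$ of Proposition \ref{thm: general smoothing FV}; this amounts to careful bookkeeping of the ratios $m(\cdot)$ attached to each mixture component rather than to a conceptual difficulty, and the remaining steps are then a direct application of the P\'olya urn rule component by component.
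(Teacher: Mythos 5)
Your conceptual route is the same as the paper's: the published proof is nothing more than the tower-property identity $\mathbb{P}(Y^{1}\in A\mid \YY_{t_{0:N}})=\E[X_{t_{i}}(A)\mid \YY_{t_{0:N}}]$ combined with the mixture representation \eqref{FV general smoothing two sums} and the fact that $\Pi_{\alpha'}$ has mean measure $\alpha'/|\alpha'|$, i.e.\ exactly your ``average the componentwise P\'olya-urn step over the smoothing mixture''.

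The gap is in the step you defer as bookkeeping. If you actually condition on $Y^{1:k}$ the way you propose --- divide the joint $(k{+}1)$-sample predictive by the $k$-sample marginal --- then each component's weight is multiplied by that component's Blackwell--MacQueen marginal (joint P\'olya-urn density) of $Y^{1:k}$ under $\Pi_{\alpha_{\kk_{i-1}+\nn_{i}+\kk_{i+1}}}$, and these factors do \emph{not} cancel across components: their normalising parts $\bigl(\theta+|\kk_{i-1}|+|\nn_{i}|+|\kk_{i+1}|\bigr)^{(k)}$ already differ with $(\kk_{i-1},\kk_{i+1})$, and the numerators differ according to which atoms of $\alpha_{\kk_{i-1}+\nn_{i}+\kk_{i+1}}$ the new samples $Y^{j}$ hit (components in which a repeated value has higher multiplicity get upweighted). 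So the Dirichlet--Categorical cancellations you invoke cannot ``recombine to yield exactly'' the $p_{\kk_{i-1},\nn_{i},\kk_{i+1}}$ of Proposition \ref{thm: general smoothing FV}; an honest version of your computation produces weights updated by the component marginal likelihood of $Y^{1:k}$, which is precisely the update the paper writes out explicitly in the Dawson--Watanabe analogue (Corollary \ref{DW predictive}, where $\tilde p\propto p\cdot p(Y^{c_{i}+1}_{1:k}\mid\cdot)\,p(m\mid\cdot)$). The paper's own proof never meets this issue because it only computes $\E[X_{t_{i}}(A)\mid\YY_{t_{0:N}}]$, i.e.\ the $k=0$ case of the display (for $k\ge1$ the formula with frozen weights corresponds to the sequential scheme described after the corollary, in which the pair $(\kk_{i-1},\kk_{i+1})$ is drawn afresh at each step, not to a literal Bayes update of the mixture). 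Either restrict your argument to $k=0$, where it reduces to the paper's proof, or carry the weight update through and state the result with the updated weights; as written, the claimed cancellation is the step that fails.
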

\begin{proof}
The statement can be easily proved by noting that 
\[
\begin{aligned}
\mathbb{P}  ( Y_i \in A  |  \YY_{t_{0:N}}  ) =& \E \left[ \mathbb{P}  ( Y_i \in A  |  X_i, \YY_{t_{0:N}}  )  |  \YY_{t_{0:N}} \right] = \E \left[ X_i(A)  |  \YY_{t_{0:N}} \right]
\end{aligned}
\]
and using \eqref{FV general smoothing two sums}, after recalling that if $W\sim \Pi_{\alpha}$ with $\Pi_{\alpha}$ as in \eqref{DP law} then $\E(W)=\alpha/\alpha(\Y)$.
\end{proof}

Here for brevity we have used the notation $Y^{k+1}$ for the additional $(k+1)$-st sample instead of the correct notation $Y^{|\nn_{i}|+k+1}$, given the original dataset already contains $|\nn_{i}|$ observations sampled at $t_{i}$.
Recall that the predictive distribution for observations sampled from a Dirichlet random measure, a realization of \eqref{DP law}, is described by the Blackwell--MacQueen P\'olya urn scheme \citep{BM73}, whereby for $\alpha=\theta P_{0}$, 
\begin{equation}\nonumber
Y_{1}\sim P_{0}, \quad \quad 
Y_{k+1}|Y_{1},\ldots,Y_{k}\sim \frac{\alpha+\sum_{j=1}^{k}\delta_{Y_{j}}}{\theta+k}.
\end{equation} 
It is then clear that \eqref{FV predictive} is a finite mixture of generalized P\'olya urn schemes, whose sampling mechanism can be described as follows. For each $k\ge1$, given we already observed the further sample $Y^{1:k}$, 
\begin{list}{
$\bullet$
}{\itemsep=1mm\topsep=2mm\itemindent=-3mm\labelsep=2mm\labelwidth=0mm\leftmargin=9mm\listparindent=0mm\parsep=0mm\parskip=0mm\partopsep=0mm\rightmargin=0mm\usecounter{enumi}}
\setcounter{enumi}{0}
\item choose a pair $(\kk_{i-1},\kk_{i+1})$ with probability $p_{\kk_{i-1},\nn_{i},\kk_{i+1}}$
\item draw a categorical random variable $J\in\{1,2,3\}$ with probabilities proportional to $\theta, |\kk_{i-1}|+|\nn_{i}| + |\kk_{i+1}|$ and $ k$ respectively
\item draw
\begin{equation}\nonumber
Y^{k+1} \sim 
\left\{
\begin{array}{ll}
P_{0},   &  \quad \text{if }J=1, \\[2mm]
\sum_{j = 1}^{K}\frac{k_{i-1,j}+n_{i,j}+k_{i+1,j}}
{|\kk_{i-1}|+|\nn_{{i}}|+|\kk_{i+1}|}\delta_{y_{j}},   &  \quad \text{if }J=2, \\[2mm]
\frac{1}{k}\sum_{j=1}^{k}\delta_{Y^{j}},  &  \quad \text{if }J=3,
\end{array}
\right.
\end{equation} 
where $(y_{1},\ldots,y_{K})$ in the second expression are the distinct values in $\YY_{t_{0:N}}$.
\end{list}

We conclude the section with the observation that \eqref{FV general smoothing two sums} is the de Finetti measure of the sequence $\{Y^{|\nn_{i}|+k},k\ge1\}$, i.e.  {as $k\to\infty$}
\begin{equation}\nonumber
\mathbb{P}(Y^{|\nn_{i}|+k+1} \in \cdot  \,|  \YY_{t_{0:N}} , Y^{|\nn_{i}|+1:|\nn_{i}|+k})
\Rightarrow
P^{*}
\quad \quad \text{a.s.}
\end{equation} 
where $\Rightarrow$ denotes weak convergence and $P^{*}$ is the law in \eqref{FV general smoothing two sums}. This can be 
proved along the same lines of Proposition 5 in \cite{ALR20}, where we also refer the reader for an application of a special case of \eqref{FV predictive}.


\section{Dawson--Watanabe signals}\label{sec:DW}

\subsection{Preliminary results}\label{sec: DW prelim}

Assume now the signal is a DW process and the observation model \eqref{DW HMM} holds. A projection of a gamma random measure $\Gamma_{\alpha}^{\beta}$ onto a measurable partition $A_1, \dots , A_K$ of $\Y$ yields independent gamma random variables $\Gamma_{\alpha}^{\beta}(A_{j})\sim \text{Ga}(\alpha(A_{j}),\beta)$. Similarly, a projection of a DW process with transition \eqref{DW transition} onto the same partition yields independent subcritical continuous-state branching processes (CSBPs), denoted as $\text{CSBP}(\alpha(A_{j}),\beta)$, which can also be seen as Cox--Ingersoll--Ross processes and have transition function
\[
p_t(z_j, \d z_j') = \sum_{m = 0}^{\infty} \text{Po}  (m  |  z_j S_t  )\text{Ga}  ( \d z_j'  |  \alpha(A_j) + m, \beta + S_t  ), \quad\quad  
S_t = \frac{\beta}{e^{\beta t/2}-1};
\]
cf.~\cite{PRS16}.
Therefore, the projection of model \eqref{DW HMM} yields
\begin{equation}\label{K-CIR HHM}
\begin{aligned}
\YY_{t_n}^i  |  \ZZ_t = \zz \overset{\text{iid}}{\sim} \text{PP}(\zz), \quad \ZZ \sim \mathrm{CSBP}_K(\aa,\beta),
\end{aligned}
\end{equation}
where $\text{CSBP}_K(\aa,\beta)$ denotes a vector of $K$ independent CSBPs as above, $\aa =  (\alpha(A_1), \dots , \alpha(A_K) )$ and $\text{PP}(\zz)$ a Poisson point process with intensity $\zz=(z(A_{1}),\ldots,z(A_{K}))$. Cf.~\eqref{PP1}. In other words, conditionally on $\ZZ_t = \zz$, the number $m$ of observations is sampled from $\text{Po}(|\zz|)$ and a vector $\YY_{t}^{1}=(Y_{t,1}^{1}, \dots, Y_{t,m}^{1})$ is drawn independently with multinomial distribution with parameters $\zz/|\zz|$. In this setting we will refer to a single \emph{draw} as the collection of such $m$-sized vector $\YY_{t}^{1}$, with $m$ random. Consider now a number $c\in \mathbb{N}$ of such draws, later referred to as the \emph{cardinality} of the vector, denoted $\YY_{t}^{1:c} = (\YY_{t}^{1}, \dots , \YY_{t}^{c})$ with multiplicities $(\nn_{t,1}, \dots , \nn_{t,c})$ associated to the distinct values $(y_1, \dots , y_K)$ observed overall. We denote by $\NN_{t}^{1:c} = \sum_{j = 1}^c\nn_{t,j}$ the vector that collects the overall multiplicities observed for the distinct values. 
The following Lemma identifies the marginal distribution of the observations.

\begin{lemma}\label{lemma_marginalDW}
Consider a sample $\YY$ with multiplicity $\NN$ and cardinality $c$ from  \eqref{K-CIR HHM}. Then the marginal likelihood is
\begin{equation}\label{K-CIR marg lkh}
r_{c}(\YY) =  \bigg(\frac{\beta}{\beta+c} \bigg)^{\theta} \bigg(\frac{1}{\beta+c}  \bigg)^{|\NN|}\frac{\Gamma (\theta+ |\NN|  )}{\Gamma (\theta  )\prod_{j = 1}^c\prod_{k = 1}^K\Gamma (n_{j,k} + 1)}m ( \NN  )
\end{equation} 
with $m(\NN)$ as in \eqref{dirichlet categorical} and $\theta = \alpha(\Y)$.
\end{lemma}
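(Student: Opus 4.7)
The plan is to compute the marginal likelihood by first writing the conditional likelihood of the sample given the signal state and then integrating against the gamma prior, exploiting the product structure that the independent coordinate-wise projections provide.

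First, I would fix the partition and work with the projected model \eqref{K-CIR HHM}. The key observation is that a draw from the Poisson point process $\mathrm{PP}(\zz)$, described in \eqref{PP1} as first sampling a Poisson total count and then allocating points multinomially, is equivalent to sampling independent Poisson counts $n_{j,k}\sim\mathrm{Po}(z_k)$ across the $K$ bins. Hence, for a single draw $\nn_j$, the likelihood factorises as
\[
p(\nn_j\mid \zz)=\prod_{k=1}^{K}\frac{z_k^{n_{j,k}}e^{-z_k}}{n_{j,k}!},
\]
and for $c$ independent draws one obtains
\[
p(\YY\mid \zz)=e^{-c|\zz|}\prod_{k=1}^{K}z_k^{N_k}\prod_{j=1}^{c}\prod_{k=1}^{K}\frac{1}{n_{j,k}!},
\]
where $N_k$ is the $k$-th entry of $\NN=\NN_t^{1:c}$.

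Next I would multiply this by the prior density of $\ZZ$, which under $\mathrm{CSBP}_K(\aa,\beta)$ at stationarity is the product of independent $\mathrm{Ga}(\alpha(A_k),\beta)$ densities, and integrate each $z_k$ separately over $(0,\infty)$. Each one-dimensional integral is a standard gamma integral yielding $\Gamma(\alpha(A_k)+N_k)/(\beta+c)^{\alpha(A_k)+N_k}$, whence
\[
r_c(\YY)=\Biggl(\prod_{j=1}^{c}\prod_{k=1}^{K}\frac{1}{\Gamma(n_{j,k}+1)}\Biggr)\prod_{k=1}^{K}\frac{\beta^{\alpha(A_k)}}{(\beta+c)^{\alpha(A_k)+N_k}}\frac{\Gamma(\alpha(A_k)+N_k)}{\Gamma(\alpha(A_k))}.
\]
Collecting the $\beta$-factors gives $(\beta/(\beta+c))^{\theta}(1/(\beta+c))^{|\NN|}$, using $\sum_k\alpha(A_k)=\theta$ and $\sum_k N_k=|\NN|$.

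Finally, I would recognise the remaining product of gamma ratios in terms of $m(\NN)$ from \eqref{dirichlet categorical}: multiplying and dividing by $\Gamma(\theta+|\NN|)/\Gamma(\theta)$ yields
\[
\prod_{k=1}^{K}\frac{\Gamma(\alpha(A_k)+N_k)}{\Gamma(\alpha(A_k))}=\frac{\Gamma(\theta+|\NN|)}{\Gamma(\theta)}\,m(\NN),
\]
which, substituted back, gives exactly \eqref{K-CIR marg lkh}. There is no genuine obstacle here; the only care needed is to correctly identify the per-draw likelihood with independent Poisson counts rather than with the multinomial form (which would incorrectly introduce multinomial coefficients), and to book-keep the $\theta$ and $|\NN|$ exponents when consolidating factors across bins.
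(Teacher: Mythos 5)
Your proposal is correct and follows essentially the same route as the paper: write the per-draw likelihood as $e^{-|\zz|}\zz^{\nn_j}/\prod_k n_{j,k}!$, multiply over the $c$ independent draws, integrate against the product of $\mathrm{Ga}(\alpha(A_k),\beta)$ densities via standard gamma integrals, and reassemble the gamma ratios into $\Gamma(\theta+|\NN|)/\Gamma(\theta)$ times $m(\NN)$. One minor remark: your closing caution about the multinomial form being ``incorrect'' is not quite right --- the paper itself starts from the $\mathrm{Po}(|\nn_j|\,;\,|\zz|)\times\mathrm{Mult}(\nn_j\,;\,|\nn_j|,\zz/|\zz|)$ factorisation, and the multinomial coefficient cancels exactly with the $|\nn_j|!$ from the Poisson, yielding the same independent-Poisson expression you use, so both starting points are equivalent.
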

\begin{proof}
Recall that $\zz^{\mm-\textbf{1}} = z_1^{m_1-1}\dots \, z_K^{m_K-1}$ for $\mm = (m_1, \dots, m_K)$.  Notice that
\[
p\left(\YY_t^j \mid \zz \right) = \text{Po}(|\nn_j| \, ; \, |\zz|)\text{Mult}(\nn_j \, ; \, |\nn_j|, \zz/|\zz|) = \frac{e^{-|\zz|}|\zz|^{|\nn_j|}}{|\nn_j|!} \, \binom{|\nn_j|}{\nn_j} \left(\frac{\zz}{|\zz|} \right)^{\nn_j} = \frac{e^{- |\zz|}}{\prod_{k = 1}^Kn_{j,k}!}\zz^{\nn_j}
\]
Then, writing $\NN=\NN_{t}^{1:c}$, we have
\[
\begin{aligned}
r_{c}(\YY)  
=&\, \int p(\YY | \zz)p(\zz)\d \zz 
= \int \frac{e^{-c|\zz|}}{\prod_{j = 1}^c\prod_{k = 1}^Kn_{j,k}!}\zz^{\NN}\frac{\beta^\theta}{\prod_{j = 1}^K\Gamma(\alpha_j)}\zz^{\aa-1}e^{-\beta |\zz|} \, \d\zz \\
=&\, \frac{\beta^\theta}{\prod_{j = 1}^c\prod_{k = 1}^Kn_{j,k}!\prod_{j = 1}^K\Gamma(\alpha_j)} \int z^{\aa+ \NN -1}e^{-(\beta+c)\zz} \, \d\zz \\
=&\,  \bigg(\frac{\beta}{\beta+c} \bigg)^{\theta} \bigg(\frac{1}{\beta+c}  \bigg)^{|\NN|}
\frac{1}{\prod_{j = 1}^c\prod_{k = 1}^Kn_{j,k}!}\prod_{j = 1}^K \frac{\Gamma(\alpha_j+ |\NN_j|)}{\Gamma(\alpha_j)} 
\end{aligned}
\]
whereby using now \eqref{dirichlet categorical} gives the result. 
\end{proof}
Note that the special case $r_{1}(\YY)$ is given by the product of the marginal distributions associated to Gamma--Poisson and Dirichlet--Multinomial models, the former being a negative binomial distribution with success probability $1/(\beta+1)$ and number of failures $\theta$, evaluated at $|\NN|$. 
Thus, denoting
\begin{equation}\label{gamma_function}
\gamma_{a}(n) =   \bigg(\frac{\beta}{\beta+a} \bigg)^{\theta} \bigg(\frac{1}{\beta+a}  \bigg)^{n}\frac{\Gamma (\theta+ n  )}{\Gamma (\theta  )}, \quad n \in \mathbb{N}, a > 0,
\end{equation}
{one can rewrite \eqref{K-CIR marg lkh} as} 
\begin{equation}\label{marginal_equality}
r_{c}(\YY) = \frac{\gamma_{c}  (| \NN|  )}{\prod_{j = 1}^c\prod_{k = 1}^K\Gamma (n_{j,k} + 1)}m ( \NN  )
\end{equation}
with $m ( \NN  )$ as in \eqref{dirichlet categorical}. In the following, with an abuse of notation, we will use $\NN$ interchangeably with $\YY$, that is $r_c(\NN)$ and $p(\NN \mid \zz)$ in place of $r_c(\YY)$ and $p(\YY \mid \zz)$: indeed, even if $\NN$ does not store the counts $|\nn_j|$ associated to each draw, the main quantities to be defined afterwards will depend on $\YY$ only through $\NN$ itself.

In this setting, the action of the update operator \eqref{update operator} is a straightforward extension of the update of a Gamma prior given Poisson observations, yielding here {$\U_{\NN} (\Ga(\aa, \beta)  ) = \Ga  (\aa+\NN, \beta+c  ).$}
Proceeding now similarly to Section \ref{sec: FV preliminaries}, define
\begin{equation}\label{h_functionCIR}
h(\zz, \NN, c) = \frac{p(\NN  |  \zz)}{r_{c}(\NN)} = e^{-c|\zz|} \bigg( \frac{\beta + c}{\beta}  \bigg)^{\theta}(\beta+c)^{|\NN|}\prod_{j = 1}^K \frac{\Gamma(\alpha_j)}{\Gamma(\alpha_j + \NN_j)}\zz^\NN,
\end{equation}
whereby we note that 
\begin{equation}\label{h p = post}
h(\zz, \NN, c)p(\zz)=p(\zz| \NN,c)
\end{equation} 
and that the product of functions in \eqref{h_functionCIR} satisfies the identity 
\begin{equation}\label{h_productCIR}
h(\zz, \NN, c)h(\zz, \MM, d) = \frac{r_{c+d}(\NN+ \MM)}{r_{c}(\NN)r_{d}(\MM)}h(\zz, \NN+\MM, c+d).
\end{equation}
Theorem 4.1 in \cite{PRS16} showed the following duality relation for $h$ functions as in \eqref{h_functionCIR},
\begin{equation}\label{K-CIR duality}
\E \left[ h(\ZZ_t, \mm, c)  |  \ZZ_0 = \zz \right] = \E \left[h(\zz, \MM_t, C_t)  |  \MM_0 = \mm, C_0 = c \right],
\end{equation}
where $\ZZ_{t}$ is as in \eqref{K-CIR HHM}, $\MM_t$ is a death process on $\mathbb{Z}_+^K$ that jumps from $\mm$ to $\mm-\ee_i$ at rate $2\mm_i(\beta+C_t)$, and $C_t$ is a deterministic function that decreases to zero as
\begin{equation}\label{deterministic}
C_t = \frac{\beta c}{(\beta + c)e^{\beta t/2}-c}, \quad \quad C_{0}=c.
\end{equation}
We will often write $C_t^c$ when it is useful to emphasize the starting point of $C_{t}$ at time 0. The process $C_{t}$ can be interpreted as a continuous-path time reversal of the deterministic process that governs the sample size. See \cite{PRS16}, Section 1.3 for a discussion. We will denote by $p^c_{\nn,\mm}(t)$ the transition probabilities of $\MM_{t}$, which depend on \eqref{deterministic} through $c$  (cf.~\cite{PRS16}, Section 4.3).  
One implication of the duality relation in \eqref{K-CIR duality}, given in the following lemma, makes the parallel with Lemma \ref{lemma: predictive likelihood} in the WF case, and provides the predictive law of observations given a past state of the signal.

\begin{lemma}
\label{lemma: cost to go_CIR}
Let $\YY_{i+1}$ be the vector of observations collected at time $t_{i+1}$ from model \eqref{K-CIR HHM}, with multiplicities $\NN_{i+1}$ and cardinality $c_{i+1}$. Then
\begin{equation}\nonumber
p(\YY_{i+1}|\zz_{t_{i}})
=r_{c_{i+1}}(\NN_{i+1}) 
\sum_{\nn\le \NN_{i+1}}p^{c_{i+1}}_{\NN_{i+1},\nn}(t)h(\zz_{t_{i}},\nn,C_{t}).
\end{equation} 
\end{lemma}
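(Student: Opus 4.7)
The plan is to follow verbatim the structure of Lemma \ref{lemma: predictive likelihood}, replacing the Wright--Fisher quantities with their Dawson--Watanabe counterparts. Let $t = t_{i+1}-t_i$. First, condition on the signal state at time $t_{i+1}$ and factor through the transition density of $\ZZ$, writing
\[
p(\YY_{i+1}|\zz_{t_i}) = \int p(\YY_{i+1}|\zz_{t_{i+1}})\, p_{t}(\zz_{t_{i+1}}|\zz_{t_i}) \,\d \zz_{t_{i+1}}.
\]

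Second, invoke the definition of $h$ in \eqref{h_functionCIR}, which may be rewritten as $p(\NN|\zz) = r_c(\NN)\, h(\zz, \NN, c)$. Substituting this for the observation density and pulling the constant $r_{c_{i+1}}(\NN_{i+1})$ outside of the integral, the remaining integral is exactly an expectation of an $h$-function against the forward semigroup of $\ZZ$:
\[
p(\YY_{i+1}|\zz_{t_i}) = r_{c_{i+1}}(\NN_{i+1}) \,\E\bigl[ h(\ZZ_t, \NN_{i+1}, c_{i+1}) \,\big|\, \ZZ_0 = \zz_{t_i}\bigr].
\]

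Third, apply the moment duality \eqref{K-CIR duality} with $\mm = \NN_{i+1}$ and $c = c_{i+1}$ to swap the roles of state and sampling index, obtaining
\[
r_{c_{i+1}}(\NN_{i+1}) \,\E\bigl[ h(\zz_{t_i}, \MM_t, C_t) \,\big|\, \MM_0 = \NN_{i+1},\, C_0 = c_{i+1} \bigr].
\]
Since $C_t$ is deterministic, the randomness on the right-hand side is entirely carried by the death process $\MM_t$, whose state space is $\{\nn \in \Z_+^K : \nn \le \NN_{i+1}\}$ when started from $\NN_{i+1}$. Expanding the expectation as a finite sum against the transition probabilities $p^{c_{i+1}}_{\NN_{i+1},\nn}(t)$ of $\MM_t$ yields the claim. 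No step presents an obstacle; the argument is essentially the same as in the FV case, with the two new features being the cardinality parameter $c$ carried along in both $h$ and the dual, and the deterministic modulation $C_t$ in place of the constant dual rate.
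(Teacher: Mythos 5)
Your proposal is correct and follows essentially the same route as the paper's proof: factor through the transition density, rewrite the observation density as $r_{c_{i+1}}(\NN_{i+1})\,h(\zz_{t_{i+1}},\NN_{i+1},c_{i+1})$ to recognize the expectation, and then apply the duality \eqref{K-CIR duality} and expand over the transition probabilities of the dual death process. The only difference is that you spell out the final expansion of the dual expectation, which the paper leaves implicit.
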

\begin{proof}
Using \eqref{h_functionCIR} we have
\begin{equation}\nonumber
\begin{aligned}
p(\YY_{i+1}|\zz_{t_{i}})
=&\,\int p(\yy_{i+1}|\zz_{t_{i+1}})p_{t_{i+1}-t_{i}}(\zz_{t_{i+1}}|\zz_{t_{i}})\d \zz_{t_{i+1}}\\
=&\,r_{c_{i+1}}(\NN_{i+1})\int h(\zz_{t_{i+1}},\NN_{i+1},c_{i+1})p_{t_{i+1}-t_{i}}(\zz_{t_{i+1}}|\zz_{t_{i}})\d \zz_{t_{i+1}}\\
=&\,r_{c_{i+1}}(\NN_{i+1})\E[h(\ZZ_{t_{i+1}},\NN_{i+1}, c_{i+1})| \ZZ_{t_{i}}=\zz_{t_{i}}]
\end{aligned}
\end{equation} 
from which \eqref{K-CIR duality} gives the result.
\end{proof}

The next Proposition corresponds to Proposition \ref{prop: nonpar backward} in the FV case and shows that the backward propagation of a conditional DW state yields the same as a forward propagation. 

\begin{proposition}
\label{prop: signal prediction CIR}
Assume \eqref{DW HMM}, let $y_{1},\ldots,y_{K}$ be distinct values and 
$\nn\in\Z_{+}^{K}$. Then {$\B_{t}(\Gamma^{\beta+c}_{\alpha+\sum_{j = 1}^Kn_j\delta_{y_j}}) 
	= \F_{t}( \Gamma^{\beta+c}_{\alpha+\sum_{j = 1}^Kn_j\delta_{y_j}})$}
with $\F_{t},\B_{t}$ as in \eqref{forward operator}-\eqref{backward operator}, 
and in particular
\begin{equation}\label{backward propagation DW}
\B_{t}\Big(\Gamma^{\beta+c}_{\alpha+\sum_{j = 1}^Kn_j\delta_{y_j}}\Big) 
=  \sum_{\oo \leq \kk \leq \nn} p^{c}_{\nn, \kk}(t)\Gamma^{\beta+C_t}_{\alpha+\sum_{j = 1}^Kk_j\delta_{y_j}}.
\end{equation} 
\end{proposition}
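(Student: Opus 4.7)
The plan is to mirror closely the proof of Proposition~\ref{prop: nonpar backward} in the FV case. I would first establish the finite-dimensional analogue of \eqref{backward propagation DW} at the level of the CSBP projection of the DW signal onto an arbitrary measurable partition $(A_1,\ldots,A_K)$ of $\Y$, and then lift to the measure-valued statement by the same projective argument used at the end of the FV proof, appealing to the analogue of Theorem 3.1 of \cite{PRS16} for DW processes.

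At the finite-dimensional level, the target identity reads
\[
\B_{t}(\Ga(\aa+\nn,\beta+c))(\zz)
=\sum_{\oo\le\kk\le\nn}p^{c}_{\nn,\kk}(t)\,\Ga(\zz;\aa+\kk,\beta+C_{t}).
\]
To derive it, I would interpret $\Ga(\aa+\nn,\beta+c)$ as the posterior of $\ZZ$ under the stationary prior $\Ga(\aa,\beta)$ given a single observation $\NN_{i+1}=\nn$ with cardinality $c_{i+1}=c$ at some future time $t_{i+1}$. Backward propagating by a lag $t=t_{i+1}-t_{i}$ then amounts to computing $p(\zz_{t_{i}}\,|\,\NN_{i+1})$, which by Bayes' Theorem equals $p(\zz_{t_{i}})\,p(\NN_{i+1}\,|\,\zz_{t_{i}})/r_{c}(\NN_{i+1})$. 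Applying Lemma~\ref{lemma: cost to go_CIR} to expand the predictive likelihood, the right-hand side becomes
\[
p(\zz_{t_{i}})\sum_{\oo\le\kk\le\nn}p^{c}_{\nn,\kk}(t)\,h(\zz_{t_{i}},\kk,C_{t}),
\]
and identity \eqref{h p = post} identifies each product $h(\zz_{t_{i}},\kk,C_{t})\,p(\zz_{t_{i}})$ with the posterior density $p(\zz_{t_{i}}\,|\,\kk,C_{t})=\Ga(\zz_{t_{i}};\aa+\kk,\beta+C_{t})$, yielding the displayed mixture. The equality $\B_{t}=\F_{t}$ then follows by comparing with the corresponding forward propagation result from \cite{PRS16}, which independently produces the same mixture representation. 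Finally, the measure-valued identity \eqref{backward propagation DW} is obtained by viewing the finite-dimensional mixture as the projection of $\B_{t}(\Gamma^{\beta+c}_{\alpha+\sum_{j}n_{j}\delta_{y_{j}}})$ onto the chosen partition, which uniquely determines the measure-valued law.

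The main delicate point relative to the FV case is tracking the shift of the rate parameter from $\beta+c$ to $\beta+C_{t}$: unlike Dirichlet reversible laws, whose projections have no free parameter to track, the gamma family has a rate that evolves backward along the deterministic trajectory \eqref{deterministic}. This shift is, however, already built into $h(\zz,\kk,c)$ through its explicit cardinality argument, so that applying \eqref{h p = post} with $C_{t}$ in place of $c$ automatically produces the correct rate update. The fact that the sum only runs over $\kk\le\nn$ then reflects the structure of the dual death process $\MM_{t}$, whose transitions $p^{c}_{\nn,\kk}(t)$ encode the probability of losing lineages backward in time.
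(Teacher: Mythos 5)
Your proposal is correct and follows essentially the same route as the paper's proof: backward propagation computed via Bayes' Theorem, Lemma \ref{lemma: cost to go_CIR} and identity \eqref{h p = post} to produce the gamma mixture with rate $\beta+C_t$, comparison with the known forward propagation from \cite{PR14} and \cite{PRS16} to conclude $\B_t=\F_t$, and the projective/lifting argument of Proposition \ref{prop: nonpar backward} (where the relevant reference is Theorem 3.2 of \cite{PRS16}, the DW analogue of Theorem 3.1) to obtain the measure-valued statement \eqref{backward propagation DW}.
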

\begin{proof}
Let  $\YY_{i-1},\YY_{i+1}$ be vectors sampled at times $t_{i-1},t_{i+1}$, with multiplicities $\NN_{i-1}, \NN_{i+1}$ and cardinalities $(c_{i-1},c_{i+1})$, respectively. Then we have
\begin{equation}\label{forward prediction}
\begin{aligned}
p (\zz_{t_{i}} | \YY_{i-1} ) 
=&\, p(\zz_{t_{i}})\E \left[h(\zz_{t_{i}}, \MM_{t_{i}}, C_{t_{i+1}})  |  \MM_{t_{i-1}} = \NN_{i-1}, C_{t_{i-1}} = c_{i-1} \right] \\
=&\,\sum_{\oo\le \kk \le \NN_{i-1}}p^{c_{i-1}}_{\NN_{i-1},\kk}(t_{i}-t_{i-1})\Ga(\aa+\kk,\beta+C_{t_{i}-t_{i-1}})(\zz_{t_{i}})
\end{aligned}
\end{equation} 
where the first equality comes from Proposition 2.2 of \cite{PR14} and the second from Proposition 4.2 of \cite{PRS16} and the time homogeneity of $C_{t}$. 
Since 
by \eqref{update operator} {$p(\zz_{t_{i}}|\YY_{i+1}) = p(\YY_{i+1} | \zz_{t_{i}})p(\zz_{t_{i}})/r(\NN_{i+1},c_{i+1}),$}
we have 
\begin{equation}\nonumber
\begin{aligned}
p(\zz_{t_{i}}|\YY_{i+1})
=
&\,\frac{p(\zz_{t_{i}})}{r(\NN_{i+1},c_{i+1})}r(\NN_{i+1}, c_{i+1}) 
\sum_{\oo\le \kk\le \NN_{i+1}}p^{c_{i+1}}_{\NN_{i+1},\kk}(t_{i+1}-t_i)h(\zz_{i},\kk,C_{t})\\
=&\,
\sum_{\oo\le\kk\le \NN_{i+1}}p^{c_{i+1}}_{\NN_{i+1},\kk}(t_{i+1}-t_{i})\Ga(\aa+\kk,\beta+C_{t_{i+1}-t_{i}})(\zz_{t_{i}})
\end{aligned}
\end{equation} 
where we have used Lemma \ref{lemma: cost to go_CIR} and then \eqref{h p = post}. The rest of the proof now proceeds along the same lines of Proposition \ref{prop: nonpar backward}, with Theorem 3.2 in \cite{PRS16} in place of Theorem 3.1.
\end{proof}


\subsection{Smoothing distributions}

We proceed similarly to Section \ref{subsec: FV smoothing}, by first providing the smoothing distribution conditional on observations collected at adjacent times, then giving a full description of the mixture weights, and finally showing how the general expression of Theorem \ref{summary theorem} can be computed. We denote by $\YY_{i-1},\YY_i,\YY_{i+1}$ vectors of draws collected at times $t_{i-1}$, $t_i$, $t_{i+1}$ respectively, each defined as in \eqref{DW HMM} and the preceding discussion, with cardinalities $c_{i-1},c_{i},c_{i+1}$ and associated multiplicities $\NN_{i-1},\NN_{i},\NN_{i+1}$ for the distinct values $(y_1, \dots , y_K)$ observed overall.

\begin{theorem}\label{one-step-smoothingDW}
Under model \eqref{DW HMM}, let $\YY_{i-1},\YY_i,\YY_{i+1}$ be as above. Then there exist weights\\ $\{w_{\kk_{i-1},\NN_{i}, \kk_{i+1}}(\Delta_{i}, \Delta_{i+1}),\kk_{i-1}\le\NN_{i-1},\kk_{i+1}\le \NN_{i+1}\}$ which sum up to one such that
%
\begin{equation}\label{one step DW}
\begin{aligned}
\L(Z_{i}&\,| \YY_{i-1}, \YY_i, \YY_{i+1})=\\
=&\,\sum_{\oo \leq \kk_{i-1} \leq \NN_{i-1}}\sum_{\oo \leq \kk_{i+1} \leq \NN_{i+1}}v^{\NN_{i-1},\NN_{i+1}}_{\kk_{i-1},\NN_{i}, \kk_{i+1}}(\Delta_{i}, \Delta_{i+1})\Gamma^{\beta(\Delta_{i},\Delta_{i+1})}_{\alpha_{\kk_{i-1}+\NN_{i-1}+\kk_{i+1}}}.
\end{aligned}
\end{equation} 
where $\alpha_{\kk_{i-1}+\NN_{i-1}+\kk_{i+1}}$ is as in \eqref{alpha conditional}, $\Delta_{i} = t_{i}-t_{i-1}, \Delta_{i+1} = t_{i+1}-t_{i}$, 
\begin{equation}\label{beta con due delta}
\beta(\Delta_{i},\Delta_{i+1})=\beta+C^{c_{i-1}}_{\Delta_{i}}+c_{i}+C^{c_{i+1}}_{\Delta_{i+1}}
\end{equation}  
and $C^c_{t}$ is as in \eqref{deterministic}.
\end{theorem}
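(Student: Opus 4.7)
The plan is to mirror the proof of Theorem \ref{FV one-step-smoothing}, replacing the Wright--Fisher ingredients with their Dawson--Watanabe counterparts from Section \ref{sec: DW prelim}. First I would project the DW signal onto a measurable partition $\A=(A_1,\ldots,A_m)$ of $\Y$ via a sequence of increasingly finer partitions $\B_n$, so that by the martingale convergence theorem, $\ZZ_{t_i}(\A)\mid \YY_{i-1}(\B_n),\YY_i(\B_n),\YY_{i+1}(\B_n)$ converges weakly to $\ZZ_{t_i}(\A)\mid \YY_{i-1},\YY_i,\YY_{i+1}$. As in the FV case, I would characterize the finite-dimensional conditional law through the predictive distribution of an extra batch of samples $Y^{1:k}$ from $Z_{t_i}$ and then appeal to de Finetti's Theorem, taking $n$ large enough that different observations fall in distinct cells of $\B_n$.

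Second, by conditional independence of the data given the signal state, I would write
\[
p(y^{1:k},\YY_{i-1,n},\YY_{i,n},\YY_{i+1,n})
=\E\big[p(y^{1:k}|\ZZ_{i,n})\,p(\YY_{i-1,n}|\ZZ_{i,n})\,p(\YY_{i,n}|\ZZ_{i,n})\,p(\YY_{i+1,n}|\ZZ_{i,n})\big].
\]
For the two adjacent-time factors I would apply Lemma \ref{lemma: cost to go_CIR} to expand $p(\YY_{i-1,n}|\ZZ_{i,n})$ and $p(\YY_{i+1,n}|\ZZ_{i,n})$ as sums over $\kk_{i-1}\le\NN_{i-1}$ and $\kk_{i+1}\le\NN_{i+1}$ of $h$-functions weighted by the death-process transition probabilities $p^{c_{i-1}}_{\NN_{i-1},\kk_{i-1}}(\Delta_i)$ and $p^{c_{i+1}}_{\NN_{i+1},\kk_{i+1}}(\Delta_{i+1})$; Proposition \ref{prop: signal prediction CIR} justifies using the same expansion for the future factor as for the past one. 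For the middle factor I would use $p(\YY_{i,n}|\ZZ_{i,n})=r_{c_i}(\NN_i)h(\ZZ_{i,n},\NN_i,c_i)$ directly from \eqref{h_functionCIR}. The key step is then to combine these three $h$-functions by iterating \eqref{h_productCIR}: the product $h(\ZZ_{i,n},\kk_{i-1},C^{c_{i-1}}_{\Delta_i})\,h(\ZZ_{i,n},\NN_i,c_i)\,h(\ZZ_{i,n},\kk_{i+1},C^{c_{i+1}}_{\Delta_{i+1}})$ collapses into a single $h$-function with multiplicity $\kk_{i-1}+\NN_i+\kk_{i+1}$ and cardinality parameter $C^{c_{i-1}}_{\Delta_i}+c_i+C^{c_{i+1}}_{\Delta_{i+1}}$, at the price of explicit combinatorial ratios of the marginal likelihoods $r_{\cdot}(\cdot)$ appearing in \eqref{h_productCIR}.

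Third, I would take the expectation $\E[p(y^{1:k}|\ZZ_{i,n})\,h(\ZZ_{i,n},\kk_{i-1}+\NN_i+\kk_{i+1},C^{c_{i-1}}_{\Delta_i}+c_i+C^{c_{i+1}}_{\Delta_{i+1}})]$, use \eqref{h p = post} to recognize it as the marginal likelihood of $y^{1:k}$ under the conditional law $\Ga(\aa+\kk_{i-1}+\NN_i+\kk_{i+1},\beta+C^{c_{i-1}}_{\Delta_i}+c_i+C^{c_{i+1}}_{\Delta_{i+1}})$, and observe that the resulting predictive for $Y^{1:k}$ is exchangeable and does not depend on the refining partition $\B_n$, because $\A$ is coarser. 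De Finetti's Theorem then identifies the limiting conditional law of $\ZZ_{t_i}(\A)$ as the projection onto $\A$ of a finite mixture of laws of gamma random measures with parameter measure $\alpha_{\kk_{i-1}+\NN_i+\kk_{i+1}}$ as in \eqref{alpha conditional} and rate $\beta(\Delta_i,\Delta_{i+1})$ as in \eqref{beta con due delta}. A global normalizing constant absorbs all the $n$-independent factors and defines the weights $v^{\NN_{i-1},\NN_{i+1}}_{\kk_{i-1},\NN_i,\kk_{i+1}}(\Delta_i,\Delta_{i+1})$, which then sum to one by construction.

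The main obstacle I anticipate is purely bookkeeping: tracking how the integer cardinalities $c_{i\pm 1}$ carried by the outer $h$-functions in Lemma \ref{lemma: cost to go_CIR} get replaced by the deterministic functions $C^{c_{i\pm 1}}_{\Delta_{i\pm 1}}$ after the duality computation, and then verifying that upon applying the product identity \eqref{h_productCIR} the cardinalities add up to exactly $C^{c_{i-1}}_{\Delta_i}+c_i+C^{c_{i+1}}_{\Delta_{i+1}}$, so that the rate parameter of the resulting gamma measures matches \eqref{beta con due delta}. A secondary subtlety is that, unlike the FV case, each draw in the DW observation model is itself a vector of Poisson-distributed random length; however, since the quantities $r_c$, $h$ and all predictives depend on the data only through the multiplicity vectors $\NN_{i\pm 1},\NN_i$ and the integer cardinalities $c_{i\pm 1},c_i$, the partition-refinement argument of Theorem \ref{FV one-step-smoothing} carries over verbatim once $n$ is large enough to separate the distinct observed types.
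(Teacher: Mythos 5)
Your proposal is correct and follows essentially the same route as the paper: the paper's proof likewise projects onto a partition refined by $\B_n$, invokes the martingale convergence theorem, and then repeats the argument of Theorem \ref{FV one-step-smoothing} using Lemma \ref{lemma: cost to go_CIR} (with Proposition \ref{prop: signal prediction CIR} for the past-time factor) and the product identity \eqref{h_productCIR}, arriving at the projected mixture of $\Ga(\aa+\kk_{i-1}+\NN_i+\kk_{i+1},\beta+C^{c_{i-1}}_{\Delta_i}+c_i+C^{c_{i+1}}_{\Delta_{i+1}})$ laws with weights independent of $\B_n$. Your bookkeeping of the cardinality parameters and the identification of the rate \eqref{beta con due delta} match the paper's computation.
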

\begin{proof}
Throughout the proof we use the same notational conventions used in the proof of Theorem \ref{FV one-step-smoothing} with the obvious modifications due to the present setting. In particular, considering the projected state $Z_1(\mathcal{A}) | \YY_0(\mathcal{B}_n),\YY_1(\mathcal{B}_n), \YY_2(\mathcal{B}_n)$ conditional on binning the observations in a  partition $\B_{n}$ finer than $\A$, we have by the martingale convergence theorem that $Z_1(\mathcal{A}) | \YY_0(\mathcal{B}_n),\YY_1(\mathcal{B}_n), \YY_2(\mathcal{B}_n) $ converges weakly to
$Z_1(\mathcal{A}) | \YY_0, \YY_1, \YY_2$. An argument similar to that used in the proof of Theorem \ref{FV one-step-smoothing}, which makes use of Lemma \ref{lemma: cost to go_CIR} and \eqref{h_productCIR}, can now be used for characterising the conditional law of the DW states by means of the collection of predictive distributions. 
Specifically, this yields
\begin{equation}\label{K-CIR smoothing}
\begin{aligned}
\L  ( &\,\ZZ_1(\A) | \YY_{0,n},\YY_{1,n},\YY_{2,n}) = \\
=&\, \sum_{\oo\le\kk_{i-1}\le \NN_{i-1}}\sum_{\oo \le \kk_{i+1} \le \NN_{i+1}}p^{c_{i-1}}_{\NN_{i+1},\kk_{i-1}}p^{c_{i+1}}_{\NN_{i+1},\kk_{i+1}}
\\
&\,\times\frac
{{r^{(n)} ( \kk_{i-1} + \NN_{i}+\kk_{i+1}, C^{c_{i-1}}_{\Delta_{i}}+c_{i}+C^{c_{i+1}}_{\Delta_{i+1}}  )}}
{C'_{n}\, r^{(n)} (\kk_{i-1}, C^{c_{i-1}}_{\Delta_{i}} )r^{(n)}(\NN_{i}, c_{i})r^{(n)} (\kk_{i+1}, C^{c_{i+1}}_{\Delta_{i+1}} )} \\
&\,\times \Ga (\alpha(\A)+ \kk_{i-1}(\A)+\NN_{i}+ \kk_{i+1}(\A),\beta+C^{c_{i+1}}_{\Delta_{i}}+c_{i}+C^{c_{i-1}}_{\Delta_{i+1}})
\end{aligned}
\end{equation} 
where $C_{n}'$ is normalizing constant and $r^{(n)}$  is the marginal distribution in Lemma \ref{lemma_marginalDW} (with the notational convention here that $r(\cdot,c):=r_{c}(\cdot)$), relative to the model induced by the partition $\B_{n}$. Similarly to the FV case, the proof is now completed by observing that the limiting weights do not depend on the partition $\B_{n}$ and the previous display coincides with the projection onto the partition $\A$ of a finite mixture of laws of Gamma processes. 
\end{proof}

We next provide a full description of the weights appearing in the statement of Theorem \ref{one-step-smoothingDW}, with the 
convention 
$\gamma(\cdot,c):=\gamma_{c}(\cdot)$, for $\gamma_{c}(\cdot)$ as in \eqref{gamma_function}.

\begin{proposition}\label{weightsDW}
In the setting of Theorem \ref{one-step-smoothingDW}, we have
\[
v^{\NN_{i-1},\NN_{i+1}}_{\kk_{i-1}, \NN_{i}, \kk_{i+1}}(\Delta_{i},\Delta_{i+1}) = \frac{\gamma (|\kk_{i-1}+\NN_{i}+\kk_{i+1}|, C_{\Delta_{i}}+c_{i}+C_{\Delta_{i+1}}  )}{\gamma (|\kk_{i-1}|, C_{\Delta_{i}}  )\gamma (|\NN_{i}|, c_{i}  )\gamma (|\kk_{i+1}|, C_{\Delta_{i+1}}  )}w^{\NN_{i-1},\NN_{i+1}}_{\kk_{i-1}, \NN_{i}, \kk_{i+1}}
\]
where $w^{\NN_{i-1},\NN_{i+1}}_{\kk_{i-1}, \NN_{i}, \kk_{i+1}}$ as in Proposition \ref{FV smoothing weights}.
\end{proposition}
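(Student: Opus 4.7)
The plan is to read the unnormalized DW weights directly off of equation~\eqref{K-CIR smoothing} in the proof of Theorem~\ref{one-step-smoothingDW} and then to decompose them, via \eqref{marginal_equality}, into the product of the claimed $\gamma$-ratio and the unnormalized FV weight of Proposition~\ref{FV smoothing weights}. Concretely, up to a $(\kk_{i-1},\kk_{i+1})$-independent constant, the DW weight is proportional to $p^{c_{i-1}}_{\NN_{i-1},\kk_{i-1}}(\Delta_i)\,p^{c_{i+1}}_{\NN_{i+1},\kk_{i+1}}(\Delta_{i+1})$ times a ratio of four $r^{(n)}$ terms, and \eqref{marginal_equality} splits each $r^{(n)}$ as $\gamma_c(|\,\cdot\,|)\,m^{(n)}(\cdot)/\prod\Gamma(\cdot+1)$.

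First, I would check that the combinatorial prefactors either cancel in the ratio or depend only on the fixed observed data $\NN_i$, and are therefore absorbed by the overall normalization. Since by \eqref{gamma_function} each $\gamma_c(n)$ depends only on the total $n$, the $\gamma$-part then reorganises exactly into the ratio $\gamma(|\kk_{i-1}+\NN_i+\kk_{i+1}|, C_{\Delta_i}+c_i+C_{\Delta_{i+1}})/[\gamma(|\kk_{i-1}|,C_{\Delta_i})\gamma(|\NN_i|,c_i)\gamma(|\kk_{i+1}|,C_{\Delta_{i+1}})]$ of the statement. Next, I would recognise that what remains---the product of the death-process transitions $p^{c_{i\pm 1}}_{\NN_{i\pm 1},\kk_{i\pm 1}}$ with the $m^{(n)}$-ratio $m^{(n)}(\kk_{i-1}+\NN_i+\kk_{i+1})/[m^{(n)}(\kk_{i-1})\,m^{(n)}(\NN_i)\,m^{(n)}(\kk_{i+1})]$---has precisely the structure of the unnormalized weight treated in Proposition~\ref{FV smoothing weights}, so passing $n\to\infty$ by exactly the same Blackwell--MacQueen P\'olya urn calculation used there (both in the discrete and in the nonatomic $P_0$ regimes) returns $w^{\NN_{i-1},\NN_{i+1}}_{\kk_{i-1},\NN_i,\kk_{i+1}}$. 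Combining the two factors proves the proposition up to a global normalizing constant that is fixed by $\sum v^{\NN_{i-1},\NN_{i+1}}_{\kk_{i-1},\NN_i,\kk_{i+1}}=1$.

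The main obstacle I anticipate is the bookkeeping of the $\prod\Gamma(\cdot+1)$ combinatorial prefactors, made delicate by the fact that the effective cardinality $C^{c_{i\pm 1}}_{\Delta_{i\pm 1}}$ attached to the ``virtual'' past and future $r^{(n)}$-terms is a positive real, not an integer, so a literal reading of \eqref{marginal_equality} is not available there. The resolution is that those virtual $r^{(n)}$-terms entered \eqref{K-CIR smoothing} only through the $h$-function/duality machinery of \eqref{h_productCIR}--\eqref{K-CIR duality}, where no such combinatorial prefactor is present; once this is acknowledged, the decomposition into a $\gamma$-part and an $m^{(n)}$-part is unambiguous modulo the $(\kk_{i-1},\kk_{i+1})$-independent constants to be absorbed in normalization, and the identity in the statement follows.
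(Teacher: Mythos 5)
Your proposal is correct and follows essentially the same route as the paper, whose proof of this proposition is simply the observation that the result follows from \eqref{marginal_equality} applied to the $r^{(n)}$-ratio in \eqref{K-CIR smoothing}: the $\gamma$-factors reorganise into the stated ratio while the death-process transitions together with the $m^{(n)}$-ratio reduce, by the same limiting argument as in Proposition \ref{FV smoothing weights}, to $w^{\NN_{i-1},\NN_{i+1}}_{\kk_{i-1},\NN_i,\kk_{i+1}}$. Your handling of the factorial prefactors and of the non-integer ``cardinalities'' $C^{c_{i\pm1}}_{\Delta_{i\pm1}}$ (noting that these terms arise through the $h$-function identity \eqref{h_productCIR}, where the prefactors cancel) correctly fills in the details the paper leaves implicit.
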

\begin{proof}
The result follows from \eqref{marginal_equality} with an easy calculation.
\end{proof}

Similarly to what we have done for the FV case, we can now revisit the second statement of Theorem \ref{summary theorem} in light of the above findings. We leave to the reader the adaptation of the argument preceding Theorem \ref{thm: general smoothing FV} to the present setting, which ultimately leads to
\begin{equation}\label{xi note 2}
\L  ( Z_{t_{i}} | \YY_{t_{0:i-1}}) = \sum_{\kk_{i-1} \leq \nminus_{i-1}}\rho_{1,\kk_{i-1}}\Gamma^{\beta+c_1}_{\alpha+\sum_{j = 1}^K k_{i-1,j}\delta_{y_j}},
\end{equation} 
and
\begin{equation}\label{xi two 2}
\L  ( Z_{t_{i}} | \YY_{t_{i+1:N}}  ) = \sum_{\kk_{i+1} \leq \nplus_{i+1}}\rho_{2,\kk_{i+1}}\Gamma^{\beta+c_2}_{\alpha+\sum_{j = 1}^K k_{i+1,j}\delta_{y_j}},
\end{equation} 
for some sets of weights $\{\rho_{1,\kk_{i-1}}\},\{\rho_{2,\kk_{i+1}}\}$, where $\nminus_{i-1},\nplus_{i+1}$ are here multiplicities associated to $\YY_{t_{0:i-1}},\YY_{t_{i+1:N}}$ respectively, relative to the distinct types $(y_{1},\ldots,y_{K})$ observed overall.

\begin{proposition}\label{thm: general smoothing DW}
Assume \eqref{xi note 2} and \eqref{xi two 2} hold, and let $\nn_{i}$ be the vector of multiplicities of $\YY_{t_{i}}^{1:c_{i}}$ relative to the $K$ distinct values in the whole dataset $\YY_{t_{0:N}}$. Then 
\begin{equation}\label{DW general smoothing two sums}
\L( Z_{t_{i}} | \YY_{t_{0:N}} ) =
\sum_{\kk_{i-1}\le \nminus_{i-1}}\sum_{\kk_{i+1}\le \nplus_{i+1}}
p_{\kk_{i-1},\nn_{i},\kk_{i+1}}
\Gamma^{\beta(\Delta_{i},\Delta_{i+1})}_{\alpha_{\kk_{i-1}+\nn_{i}+\kk_{i+1}}}
\end{equation} 
with $\alpha_{\kk_{i-1}+\nn_{i}+\kk_{i+1}}$ as in \eqref{alpha conditional} and  where the weights
\begin{equation}\label{DW pesi general smoothing two sums}
p_{\kk_{i-1}, \nn_{i},\kk_{i+1}}= 
\sum_{\hh\le \nminus_{i-1}:\,\hh \ge \kk_{i-1}}
\sum_{\ll\le \nplus_{i+1}:\, \ll \geq \kk_{i+1}}
\rho_{1,\hh}\rho_{2,\ll}v^{\hh, \ll}_{\kk_{i-1}, \nn_i,\kk_{i+1}},
\quad 
\kk_{i-1}\le \nminus_{i-1}, \kk_{i+1}\le \nplus_{i+1},
\end{equation} 
with $v^{\hh, \ll}_{\kk_{i-1}, \nn_i,\kk_{i+1}}$ as in Proposition \eqref{weightsDW}, sum up to one.

\end{proposition}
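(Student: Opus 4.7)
The plan is to mirror the proof of Proposition \ref{thm: general smoothing FV}, using Theorem \ref{one-step-smoothingDW} in place of Theorem \ref{FV one-step-smoothing}. The key observation is that the proof of Theorem \ref{one-step-smoothingDW} rests on the conditional independence of observations given the signal state and on the factorization of the likelihood into past, present and future contributions. As in the FV case, this factorization holds equally well when the past block is the full history $\YY_{t_{0:i-1}}$ and the future block is $\YY_{t_{i+1:N}}$, since conditionally on $Z_{t_i}$ the three blocks remain independent by the Markov property. Moreover, by Proposition \ref{prop: signal prediction CIR} the backward propagation of a Gamma random measure of the form in \eqref{xi two 2} produces the same kind of finite mixture as a forward propagation, so the one-step smoothing argument applies symmetrically.

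First, I would substitute the mixture representations \eqref{xi note 2} and \eqref{xi two 2} into the argument of Theorem \ref{one-step-smoothingDW}, playing the roles that single atomic starting measures $\Gamma^{\beta+c_{i-1}}_{\alpha+\sum_j h_j\delta_{y_j}}$ and $\Gamma^{\beta+c_{i+1}}_{\alpha+\sum_j \ell_j\delta_{y_j}}$ play there. By linearity of the forward and backward propagation operators \eqref{forward operator}--\eqref{backward operator} and of the update operator \eqref{update operator} applied to a convex combination, this produces a nested sum: an outer double sum over $(\hh,\ll)$ weighted by $\rho_{1,\hh}\rho_{2,\ll}$, and, for each such pair, the inner double sum from \eqref{one step DW} over $(\kk_{i-1},\kk_{i+1})\le(\hh,\ll)$ with weights $v^{\hh,\ll}_{\kk_{i-1},\nn_i,\kk_{i+1}}$ given by Proposition \ref{weightsDW}. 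The Gamma component $\Gamma^{\beta(\Delta_i,\Delta_{i+1})}_{\alpha_{\kk_{i-1}+\nn_i+\kk_{i+1}}}$ appearing in each term depends on $(\hh,\ll)$ only through the ranges of summation (via the death process transition probabilities), not through the shape or rate parameters. Swapping the order of summation and collecting all coefficients multiplying a given component yields exactly the weights \eqref{DW pesi general smoothing two sums} and the representation \eqref{DW general smoothing two sums}.

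The remaining check is that the weights sum to one, which follows from $\sum_{\kk_{i-1}\le \hh,\,\kk_{i+1}\le \ll}v^{\hh,\ll}_{\kk_{i-1},\nn_i,\kk_{i+1}}=1$ (by Proposition \ref{weightsDW} and Theorem \ref{one-step-smoothingDW}) combined with $\sum_{\hh,\ll}\rho_{1,\hh}\rho_{2,\ll}=1$. The main obstacle I anticipate is the bookkeeping of cardinalities: in \eqref{beta con due delta} the rate $\beta(\Delta_i,\Delta_{i+1})$ involves $C^{c_{i-1}}_{\Delta_i}$ and $C^{c_{i+1}}_{\Delta_{i+1}}$, and one must verify that the cardinalities $c_1$ and $c_2$ carried forward by the filtering and backward-filtering recursions leading to \eqref{xi note 2}--\eqref{xi two 2} enter the rate parameter correctly so that the one-step formula remains applicable when the two mixtures are combined. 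This is essentially a consequence of the identity \eqref{h_productCIR} together with the recursive structure of \eqref{deterministic} under composition of update and propagation operators, and thus poses no substantive difficulty once the notation is fixed.
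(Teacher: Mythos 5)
Your proposal is correct and follows essentially the same route as the paper, whose proof is simply the observation that the argument of Proposition \ref{thm: general smoothing FV} carries over: the likelihood factorization over past, present and future blocks used in Theorem \ref{one-step-smoothingDW}, combined with linearity and a regrouping of the weights, with normalization inherited from the one-step weights and from $\sum_{\hh,\ll}\rho_{1,\hh}\rho_{2,\ll}=1$. Your extra remarks on the component-independence of the rate parameter and the cardinality bookkeeping via \eqref{deterministic} are sound and merely make explicit what the paper leaves implicit.
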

\begin{proof}
The proof is analogous to that of Proposition \ref{thm: general smoothing FV}.
\end{proof}


\subsection{Predictive distributions}

As a corollary to Proposition \ref{thm: general smoothing DW}, we can derive the predictive distribution of further samples collected at time $t_i$, given the original data set $\YY_{t_{0:N}}$. If the latter contains $c_{i}$ draws at time $t_{i}$, i.e., $\YY_{t_{i}} = (\YY_{t_{i}}^{1}, \dots , \YY_{t_{i}}^{c_i})$, denote for the present purposes by $\YY^{c_i+1}_{1:m}:=(Y^{c_i+1}_{1},\ldots,Y^{c_i+1}_{m})$ the elements of the $(c_{i}+1)$-th draw at $t_{i}$, where $m$ is random and we have dropped the subscript $t_{i}$. 

\begin{corollary}\label{DW predictive}
In the setting of Proposition \ref{thm: general smoothing DW}, let \eqref{DW general smoothing two sums} be the conditional law of $Z_{t_{i}}$ given $\YY_{t_{0:N}}$, and let 
\begin{equation}\label{predictiveDW_m}
m | \YY_{t_{0:N}} \sim 
\sum_{\kk_{i-1}\le \nminus_{i-1}}\sum_{\kk_{i+1}\le\nplus_{i+1}}
p_{\kk_{i-1}, \nn_{i}, \kk_{i+1}}\mathrm{NegBin} \left( \tilde\theta, \{1+\beta(\Delta_{i},\Delta_{i+1})\}^{-1} \right),
\end{equation}
with $\tilde\theta=\theta + |\kk_{i-1}| +|\nn_{i}|+ |\kk_{i+1}|$ and $\beta(\Delta_{i},\Delta_{i+1})$ as in \eqref{beta con due delta}.
Then the law of the $(k+1)$-st further element of the $(c_i+1)$-st draw from \eqref{DW HMM} is
\begin{equation}\label{predictiveDW_y}
\begin{aligned}
\mathbb{P}  (&\, Y^{c_i+1}_{k+1} \in A  |  \YY_{t_{0:N}}, Y^{c_i+1}_{1:k},m)  =\\
=&\,
\sum_{\kk_{i-1}\le \nminus_{i-1}}\sum_{\kk_{i+1}\le \nplus_{i+1}}
\tilde p_{\kk_{i-1},\nn_{i},\kk_{i+1}}
\frac{\alpha_{\kk_{i-1}+\nn_{i}+\kk_{i+1}}(A)
+ \sum_{j=1}^{k}\delta_{Y^{c_i+1}_j}(A)}
{\theta+ |\kk_{i-1}|+|\nn_{i}| + |\kk_{i+1}|+k}
\end{aligned}
\end{equation}
for every $A$ Borel set of $\Y$, with $\alpha_{\kk_{i-1}+\nn_{i}+\kk_{i+1}}$ as in \eqref{alpha conditional}, 
\begin{equation}\nonumber
\tilde p_{\kk_{i-1},\nn_{i},\kk_{i+1}} \propto 
p_{\kk_{i-1},\nn_{i},\kk_{i+1}}\, 
p(Y^{c_i+1}_{1:k} |\kk_{i-1},\nn_{i},\kk_{i+1})
p(m |\, |\kk_{i-1}|,|\nn_{i}|,|\kk_{i+1}|)
\end{equation} 
where $p_{\kk_{i-1},\nn_{i},\kk_{i+1}}$ are as in \eqref{DW pesi general smoothing two sums}, and $p(m |\, |\kk_{i-1}|,|\nn_{i}|,|\kk_{i+1}|)$, $p(Y^{c_i+1}_{1:k} |\kk_{i-1},\nn_{i},\kk_{i+1})$ are the distribution induced by \eqref{predictiveDW_m} and \eqref{predictiveDW_y} respectively.
\end{corollary}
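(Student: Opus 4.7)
The plan is to exploit the standard decomposition of a gamma random measure $Z \sim \Gamma^{\beta'}_{\alpha'}$ into its independent total mass $|Z| \sim \mathrm{Gamma}(|\alpha'|, \beta')$ and its normalized part $\bar Z := Z/|Z| \sim \Pi_{\alpha'}$. Under the sampling model \eqref{PP1}, $m\,|\,Z \sim \mathrm{Po}(|Z|)$ depends on $Z$ only through $|Z|$, while $Y^{c_i+1}_{1:m}\,|\,Z, m$ are iid draws from $\bar Z$. This factorization is the mechanism that allows updating mass and direction separately within each component of the mixture \eqref{DW general smoothing two sums}, and is the single idea driving the whole argument.

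To establish \eqref{predictiveDW_m}, I would condition on $Z_{t_i}$ and integrate $\mathrm{Po}(m\,|\,|Z_{t_i}|)$ against \eqref{DW general smoothing two sums} component by component. Inside the component indexed by $(\kk_{i-1}, \kk_{i+1})$ the total mass $|Z_{t_i}|$ has a $\mathrm{Gamma}(\tilde\theta, \beta(\Delta_i, \Delta_{i+1}))$ distribution with $\tilde\theta = \theta + |\kk_{i-1}| + |\nn_i| + |\kk_{i+1}|$, so the standard Poisson--Gamma mixing produces a negative binomial with success probability $\{1 + \beta(\Delta_i, \Delta_{i+1})\}^{-1}$ and $\tilde\theta$ failures, consistently with the discussion after Lemma \ref{lemma_marginalDW}. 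Summing over the mixture components with the smoothing weights $p_{\kk_{i-1}, \nn_i, \kk_{i+1}}$ delivers \eqref{predictiveDW_m}.

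For \eqref{predictiveDW_y}, I would write $\mathbb{P}(Y^{c_i+1}_{k+1} \in A\,|\,\YY_{t_{0:N}}, m, Y^{c_i+1}_{1:k}) = \E[\bar Z_{t_i}(A)\,|\,\YY_{t_{0:N}}, m, Y^{c_i+1}_{1:k}]$ and apply Bayes' rule to the smoothing mixture. Using the factorization above, the likelihood of $(m, Y^{c_i+1}_{1:k})$ given $Z_{t_i}$ splits into a Poisson factor in $|Z_{t_i}|$ and an iid sampling factor in $\bar Z_{t_i}$, so component by component the posterior preserves the independence of mass and direction: $|Z_{t_i}|$ updates to $\mathrm{Gamma}(\tilde\theta + m, \beta(\Delta_i, \Delta_{i+1}) + 1)$, while $\bar Z_{t_i}$ updates by standard DP conjugacy to a Dirichlet process with parameter $\alpha_{\kk_{i-1}+\nn_i+\kk_{i+1}} + \sum_{j=1}^k \delta_{Y^{c_i+1}_j}$. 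The mixing weights get rescaled by the product of the two component-wise marginal likelihoods---the negative binomial of $m$ and the Blackwell--MacQueen marginal of $Y^{c_i+1}_{1:k}$---which is exactly $\tilde p_{\kk_{i-1}, \nn_i, \kk_{i+1}}$. Finally, the Blackwell--MacQueen predictive expectation $\E[\bar Z_{t_i}(A)\,|\,\cdots]$ within each updated component yields the stated ratio in \eqref{predictiveDW_y}.

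The delicate point is verifying cleanly that the joint posterior of $Z_{t_i}$ given $(m, Y^{c_i+1}_{1:k})$ is still a mixture of gamma random measures whose total mass and normalized directional part remain independent within each mixture term, since $m$ and the $Y^{c_i+1}_{1:k}$ are correlated marginally through $Z_{t_i}$. Once that conditional independence is in place, both \eqref{predictiveDW_m} and \eqref{predictiveDW_y} fall out routinely from Poisson--Gamma and Dirichlet--multinomial conjugacy already used elsewhere in the paper. The statement conveniently defines the marginal factors $p(m\,|\,\cdot)$ and $p(Y^{c_i+1}_{1:k}\,|\,\cdot)$ implicitly via \eqref{predictiveDW_m} and \eqref{predictiveDW_y}, so the proof can avoid writing out the explicit negative binomial and P\'olya-urn normalizing constants entering $\tilde p$.
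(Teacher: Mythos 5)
Your proposal is correct and follows essentially the same route as the paper: Poisson--Gamma mixing of the total mass within each smoothing component gives the negative binomial law for $m$, and the predictive for further elements follows from $\Gamma^\beta_\alpha/|\Gamma^\beta_\alpha| \overset{\d}{=} \Pi_\alpha$ together with the P\'olya-urn argument of Corollary \ref{cor:FV predictive}, with the weights reweighted by the component-wise marginal likelihoods. The only difference is that you spell out explicitly the independence of total mass and normalized direction (and hence the factorized Bayes update of the weights), which the paper leaves implicit in its appeal to the FV case.
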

\begin{proof}
Following \eqref{PP1}, with $M$ the random number of elements of $\YY^{c_i+1}$, we can write 
\[
\begin{aligned}
\mathbb{P}( &\,M = m  |  \YY_{t_{0:N}} ) = \E 
\Big[ \mathbb{P}( M = m  |  Z_{t_i} )  |  \YY_{t_{0:N}}\Big] 
= \E \bigg[\frac{e^{-|Z_{t_i}|}|Z_{t_i}|^m}{m!}  \Big\vert  \YY_{t_{0:N}}\bigg] \\
& = 
\sum_{\kk_{i-1}\le \nminus_{i-1}}\sum_{\kk_{i+1}\le\nplus_{i+1}}
p_{\kk_{i-1},\nn_{i},\kk_{i+1}}
\int \frac{e^{-z}z^m}{m!}\frac{\beta(\Delta_{i},\Delta_{i+1})^{\tilde \theta}}{\Gamma(\tilde \theta)}z^{\tilde \theta-1}e^{-\beta(\Delta_{i},\Delta_{i+1})\,z}\, \d z \\
& = \sum_{\kk_{i-1}\le \nminus_{i-1}}\sum_{\kk_{i+1}\le \nplus_{i+1}}
p_{\kk_{i-1},\nn_{i},\kk_{i+1}}
\mathrm{NegBin} \Big( \tilde \theta, \{1+\beta(\Delta_{i},\Delta_{i+1}) \}^{-1} \Big)
\end{aligned}
\] 
where we have used Proposition \ref{thm: general smoothing DW}, with $\tilde \theta = |\alpha_{\kk_{i-1}+\nn_{i}+\kk_{i+1}}|$ and $\beta(\Delta_{i},\Delta_{i+1})$ as in \eqref{beta con due delta}, from which the first statement follows (cf.~also Lemma \ref{lemma_marginalDW}). Notice that, as desired,
\[
|\alpha_{\kk_{i-1}+\nn_{i}+\kk_{i+1}}| = \alpha_{\kk_{i-1}+\nn_{i}+\kk_{i+1}}(\Y) = \theta +|\kk_{i-1}+\nn_{i}+\kk_{i+1}| = \theta + |\kk_{i-1}| +|\nn_{i}|+ |\kk_{i+1}|.
\]
Thus, again from \eqref{PP1}, conditional on $M = m$, the $m$ elements of $\YY^{c_i+1}$ are sampled from the marginal distribution induced by the normalized measure $Z_{t_i}/|Z_{t_i}|$. The second statement then follows along the same lines of Corollary \ref{cor:FV predictive}, upon noting that $\Gamma^\beta_\alpha/|\Gamma^\beta_\alpha| \overset{\d}{=} \Pi_\alpha$ for every finite measure $\alpha$.
\end{proof}
Similar implications to those discussed at the end of Section \ref{subsec: FV pred} apply here as well.


%




\end{document}